\documentclass[11pt]{amsart}
\usepackage{mathrsfs,latexsym,amsfonts,amssymb}
\setcounter{page}{1} \setlength{\textwidth}{14.6cm}
\setlength{\textheight}{22.5cm} \setlength{\evensidemargin}{0.8cm}
\setlength{\oddsidemargin}{0.8cm} \setlength{\topmargin}{0.8cm}

\newtheorem{theorem}{Theorem}[section]
\newtheorem{lemma}[theorem]{Lemma}
\newtheorem{corollary}[theorem]{Corollary}

\theoremstyle{definition}
\newtheorem{definition}[theorem]{Definition}
\newtheorem{proposition}[theorem]{Proposition}
\theoremstyle{remark}
\newtheorem{remark}[theorem]{Remark}

\begin{document}

\title[Quotient with respect to admissible $L$-subgyrogroups]
{Quotient with respect to admissible $L$-subgyrogroups}

\author{Meng Bao}
\address{(Meng Bao): 1. School of mathematics and statistics,
Minnan Normal University, Zhangzhou 363000, P. R. China}
\email{mengbao95213@163.com}

\author{Fucai Lin*}
\address{(Fucai Lin): School of mathematics and statistics,
Minnan Normal University, Zhangzhou 363000, P. R. China}
\email{linfucai2008@aliyun.com; linfucai@mnnu.edu.cn}

\thanks{The authors are supported by the NSFC (Nos. 11571158), the Natural Science Foundation of Fujian Province (No. 2017J01405) of China, the Program for New Century Excellent Talents in Fujian Province University, the Institute of Meteorological Big Data-Digital Fujian and Fujian Key Laboratory of Data Science and Statistics.\\
*corresponding author}

\keywords{Topological gyrogroup; strongly topological gyrogroup; perfect mapping; locally compact; admissible subgyrogroup; $L$-subgyrogroup; paracompact space.}
\subjclass[2020]{Primary 54A20; secondary 20N05; 18A32; 20A05; 20B30.}

\begin{abstract}
The concept of gyrogroups, with a weaker algebraic structure without associative law, was introduced under the background of $c$-ball of relativistically admissible velocities with Einstein velocity addition. A topological gyrogroup is just a gyrogroup endowed with a compatible topology such that the multiplication is jointly continuous and the inverse is continuous. This concept is a good generalization of a topological group. In this paper, we are going to establish that for a locally compact admissible $L$-subgyrogroup $H$ of a strongly
topological gyrogroup $G$, the natural quotient mapping $\pi$ from $G$ onto the quotient space $G/H$
has some nice local properties, such as, local compactness, local pseudocompactness, local paracompactness, etc. Finally, we prove that each locally paracompact strongly topological gyrogroup is paracompact.
\end{abstract}

\maketitle
\section{Introduction}
The concept of a gyrogroup was discovered by Ungar in researching $c$-ball of relativistically admissible velocities with the Einstein velocity addition in \cite{UA}. The Einstein velocity addition $\oplus_{E}$ in the $c$-ball is given by the equation
$$\mathbf{u}\oplus_{E}\mathbf{v}=\frac{1}{1+\frac{\langle \mathbf{u}, \mathbf{v}\rangle}{c^{2}}}\left\{\mathbf{u}+\frac{1}{\gamma_{\mathbf{u}}}\mathbf{v}+\frac{1}{c^{2}}\frac{\gamma_{\mathbf{u}}}{1+\gamma_{\mathbf{u}}}\langle\mathbf{u}, \mathbf{v}\rangle\mathbf{u}\right\},$$where $\mathbf{u}, \mathbf{v}\in\mathbb{R}_{c}^{3}=\{\mathbf{v}\in\mathbb{R}^{3}: \parallel\mathbf{v}\parallel<c\}$ and $\gamma_{\mathbf{u}}$ is the Lorentz factor given by $$\gamma_{\mathbf{u}}=\frac{1}{\sqrt{1-\frac{\parallel\mathbf{u}\parallel^{2}}{c^{2}}}}.$$ The system $(\mathbb{R}_{c}^{3},
\oplus_{E})$ does not form a group because $\oplus_{E}$ is neither associative nor commutative.
However, it has a weaker algebraic structure than a group, and now it has been a very interesting topic during the past few years, see \cite{AW2020,FM,FM1,LF,LF1,LF2,LF3,SL,ST,ST1,ST2,UA2002,WAS2020}. In particular, Atiponrat \cite{AW} defined the concept of topological gyrogroups, and then Cai, Lin and He in \cite{CZ} proved that every topological gyrogroup is a rectifiable space. In 2019, Bao and Lin \cite{BL} defined the concept of strongly topological gyrogroups and proved that every feathered strongly topological gyrogroup is paracompact, which implies that every feathered strongly topological gyrogroup is a $D$-space. Moreover, Bao and Lin also proved that every locally compact $NSS$-gyrogroup is first-countable and each Lindel\"{o}f $P$-gyrogroup is Ra\v{\i}kov complete.

In this paper, we mainly discuss perfect mappings between strongly topological gyrogroups. In particular, we prove that if $(G, \tau, \oplus)$ is a strongly topological gyrogroup with a symmetric neighborhood base $\mathscr U$ at $0$, $H$ is a locally compact admissible subgyrogroup generated from $\mathscr U$, and $\pi :G\rightarrow G/H$ is the natural quotient mapping from $G$ onto the quotient space $G/H$, then there exists an open neighborhood $U$ of the identity element $0$ such that $\pi (\overline{U})$ is closed in $G/H$ and the restriction of $\pi $ to $\overline{U}$ is a perfect mapping from $\overline{U}$ onto the subspace $\pi (\overline{U})$. Therefore, if $H$ is a locally compact admissible $L$-subgyrogroup of a strongly topological gyrogroup $G$ such that the quotient space $G/H$ has some nice properties which are inherited by regular closed sets and preserved by perfect preimages, such as, locally compact, locally countablely compact, locally pseudocompact, locally paracompact and so on, then the space $G$ has the same properties too. These results extend some well known results for topological groups. Further, we prove that each locally paracompact strongly topological gyrogroup is paracompact.

\smallskip
\section{Preliminaries}
In this section, we introduce necessary notation, terminology and facts about topological gyrogroups.

Throughout this paper, all topological spaces are assumed to be
Hausdorff, unless otherwise is explicitly stated. Let $\mathbb{N}$ be the set of all positive integers and $\omega$ the first infinite ordinal. Let $X$ be a topological space $X$, and let $A$ be a subset of $X$.
  The {\it closure} of $A$ in $X$ is denoted by $\overline{A}$ and the
  {\it interior} of $A$ in $X$ is denoted by $\mbox{Int}(A)$. The readers may consult \cite{AA, E, linbook} for notation and terminology not explicitly given here.

\begin{definition}\cite{AW}
Let $G$ be a nonempty set, and let $\oplus: G\times G\rightarrow G$ be a binary operation on $G$. Then the pair $(G, \oplus)$ is called a {\it groupoid}. A function $f$ from a groupoid $(G_{1}, \oplus_{1})$ to a groupoid $(G_{2}, \oplus_{2})$ is called a {\it groupoid homomorphism} if $f(x\oplus_{1}y)=f(x)\oplus_{2} f(y)$ for any elements $x, y\in G_{1}$. Furthermore, a bijective groupoid homomorphism from a groupoid $(G, \oplus)$ to itself will be called a {\it groupoid automorphism}. We write $\mbox{Aut}(G, \oplus)$ for the set of all automorphisms of a groupoid $(G, \oplus)$.
\end{definition}

\begin{definition}\cite{UA}
Let $(G, \oplus)$ be a groupoid. The system $(G,\oplus)$ is called a {\it gyrogroup}, if its binary operation satisfies the following conditions:

\smallskip
(G1) There exists a unique identity element $0\in G$ such that $0\oplus a=a=a\oplus0$ for all $a\in G$.

\smallskip
(G2) For each $x\in G$, there exists a unique inverse element $\ominus x\in G$ such that $\ominus x \oplus x=0=x\oplus (\ominus x)$.

\smallskip
(G3) For all $x, y\in G$, there exists $\mbox{gyr}[x, y]\in \mbox{Aut}(G, \oplus)$ with the property that $x\oplus (y\oplus z)=(x\oplus y)\oplus \mbox{gyr}[x, y](z)$ for all $z\in G$.

\smallskip
(G4) For any $x, y\in G$, $\mbox{gyr}[x\oplus y, y]=\mbox{gyr}[x, y]$.
\end{definition}

Note that a group is a gyrogroup $(G,\oplus)$ such that $\mbox{gyr}[x,y]$ is the identity function for all $x, y\in G$. The definition of a subgyrogroup is given as follows.

\begin{definition}\cite{ST}
Let $(G,\oplus)$ be a gyrogroup. A nonempty subset $H$ of $G$ is called a {\it subgyrogroup}, denoted
by $H\leq G$, if the following statements hold:

\smallskip
(i) The restriction $\oplus| _{H\times H}$ is a binary operation on $H$, i.e. $(H, \oplus| _{H\times H})$ is a groupoid.

\smallskip
(ii) For any $x, y\in H$, the restriction of $\mbox{gyr}[x, y]$ to $H$, $\mbox{gyr}[x, y]|_{H}$ : $H\rightarrow \mbox{gyr}[x, y](H)$, is a bijective homomorphism.

\smallskip
(iii) $(H, \oplus|_{H\times H})$ is a gyrogroup.

\smallskip
Furthermore, a subgyrogroup $H$ of $G$ is said to be an {\it $L$-subgyrogroup} \cite{ST}, denoted
by $H\leq_{L} G$, if $\mbox{gyr}[a, h](H)=H$ for all $a\in G$ and $h\in H$.

\end{definition}

\begin{definition}\cite{AW}
A triple $(G, \tau, \oplus)$ is called a {\it topological gyrogroup} if the following statements hold:

\smallskip
(1) $(G, \tau)$ is a topological space.

\smallskip
(2) $(G, \oplus)$ is a gyrogroup.

\smallskip
(3) The binary operation $\oplus: G\times G\rightarrow G$ is jointly continuous while $G\times G$ is endowed with the product topology, and the operation of taking the inverse $\ominus (\cdot): G\rightarrow G$, i.e. $x\rightarrow \ominus x$, is also continuous.
\end{definition}

\begin{definition}\cite{BL}
Let $G$ be a topological gyrogroup. We say that $G$ is a {\it strongly topological gyrogroup} if there exists a neighborhood base $\mathscr U$ of $0$ such that, for every $U\in \mathscr U$, $\mbox{gyr}[x, y](U)=U$ for any $x, y\in G$. For convenience, we say that $G$ is a strongly topological gyrogroup with neighborhood base $\mathscr U$ of $0$. Clearly, we may assume that $U$ is symmetric for each $U\in\mathscr U$.
\end{definition}

\begin{remark}
By \cite[Example 3.1]{BL}, there exists a strongly topological gyrogroup which is not a topological group. Moreover, the authors gave an example in \cite{BL} to show that there exists a strongly topological gyrogroup which has an infinite $L$-subgyrogroup, see \cite[Example 3.2]{BL}.
\end{remark}

\begin{definition}\cite{UA}
Let $(G,\oplus)$ be a gyrogroup, and let $x\in G$. We define the {\it left gyrotranslation} by $x$ to be the function $L_{x} : G\rightarrow G$ such that $L_{x} (y)=x\oplus y$ for any $y\in G$. In addition, the {\it right gyrotranslation} by $x$ is defined to be the function $R_{x} (y)=y\oplus x$ for any $y\in G$. Clearly, all left and right gyrotranslations are homeomorphic mappings from $G$ onto itself.
\end{definition}

\begin{definition}\cite{E}
A continuous mapping $f: X\rightarrow Y$ is called {\it closed} (resp. {\it open}) if for every closed (resp. open) set $A\subset X$ the image $f(A)$ is closed (resp. {\it open}) in $Y$.
\end{definition}

\begin{definition}\cite{E}
A continuous mapping $f:X\rightarrow Y$ is {\it perfect} if $f$ is a closed mapping and all fibers $f^{-1}(y)$ are compact subsets of $X$.
\end{definition}

\begin{definition}\cite{AA}
Let $G$ be a topological gyrogroup, and $A$ and $B$ be subsets of $G$. Let us say that $A$ and $B$ are {\it cross-complementary} in $G$ if $G=A\oplus B=\{a\oplus b:a\in A,b\in B\}$. Suppose that $\mathscr P$ is a topological property, and $A$ is a subset of $G$. We say that $A$ has a {\it $\mathscr P$-grasp} on $G$ if there exists a subset $B$ of $G$ such that $B$ is cross-complementary to $A$ and has the property $\mathscr P$.
\end{definition}

Finally, we give some facts about gyrogroups and topological gyrogroups, which are important in our proofs.

\begin{proposition}\cite{ST}.
Let $(G, \oplus)$ be a gyrogroup, and let $H$ be a nonempty subset of $G$. Then $H$ is a subgyrogroup if and only if the following statements are true:

\smallskip
(1) For any $x\in H$, $\ominus x\in H$, and

\smallskip
(2) for any $x, y\in H$, $x\oplus y\in H$.
\end{proposition}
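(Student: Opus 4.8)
The plan is to prove the two implications separately, relying only on the axioms (G1)--(G4) together with three standard facts of gyrogroup theory: the left and right cancellation laws (the right one being available because every right gyrotranslation $R_x$ is a bijection of $G$, as recorded in the definition of gyrotranslations), the gyrator identity
$$\mbox{gyr}[a,b](z)=\ominus(a\oplus b)\oplus\big(a\oplus(b\oplus z)\big),$$
which is obtained simply by solving (G3) for $\mbox{gyr}[a,b](z)$ through left cancellation, and the gyroautomorphism inversion law $\mbox{gyr}[a,b]^{-1}=\mbox{gyr}[b,a]$.

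For the forward implication, I would assume $H$ is a subgyrogroup and read off (2) directly from condition (i): saying that $\oplus|_{H\times H}$ is a binary operation on $H$ is exactly saying $x\oplus y\in H$ for all $x,y\in H$. To obtain (1), I would use condition (iii): since $(H,\oplus|_{H\times H})$ is itself a gyrogroup, it has an identity $e_H$ and, for each $x\in H$, an inverse $x^{\natural}\in H$. Applying left cancellation in $G$ to $e_H\oplus e_H=e_H=0\oplus e_H$ gives $e_H=0$, so in fact $0\in H$; then from $x^{\natural}\oplus x=e_H=0=\ominus x\oplus x$ right cancellation in $G$ forces $x^{\natural}=\ominus x$, whence $\ominus x\in H$, which is (1).

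For the reverse implication, assume (1) and (2). First note $0\in H$: choosing any $x\in H$, both $\ominus x$ (by (1)) and $x\oplus(\ominus x)=0$ (by (2)) lie in $H$; condition (i) is then immediate from (2). The substantive point is to show that for $a,b\in H$ the map $\mbox{gyr}[a,b]$ restricts to an \emph{automorphism} of $(H,\oplus|_{H\times H})$. Using the gyrator identity, for $a,b,z\in H$ every term on the right-hand side stays in $H$ by (1) and (2), so $\mbox{gyr}[a,b](H)\subseteq H$; the restriction is thus a bijective homomorphism onto its image, which gives (ii). To promote this inclusion to the equality $\mbox{gyr}[a,b](H)=H$ demanded by (iii), I would invoke the inversion law: since $b,a\in H$ as well, the same closure computation yields $\mbox{gyr}[b,a](H)\subseteq H$, i.e. $\mbox{gyr}[a,b]^{-1}(H)\subseteq H$, hence $H\subseteq\mbox{gyr}[a,b](H)$ and therefore equality. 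With the gyrations now known to be automorphisms of $H$, conditions (G1) and (G2) hold via $0$ and $\ominus x$, while (G3) and (G4) are inherited verbatim from $G$ because all the elements and all the gyrations they involve lie in $H$; this establishes (iii) and completes the proof.

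I expect the one genuinely nontrivial step to be the surjectivity of $\mbox{gyr}[a,b]|_{H}$ onto $H$. The inclusion $\mbox{gyr}[a,b](H)\subseteq H$ is a routine closure argument, but equality cannot be deduced from injectivity alone when $H$ is infinite (recall that the paper works with strongly topological gyrogroups admitting infinite $L$-subgyrogroups), so the gyroautomorphism inversion law is essential precisely here. Everything else is bookkeeping with the two cancellation laws and the observation that condition (i) is literally a restatement of (2).
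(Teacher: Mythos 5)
The paper never proves this proposition: it is quoted as background directly from \cite{ST} (the subgyrogroup criterion of Suksumran and Wiboonton), so there is no in-paper argument to compare against, and your blind proof must stand on its own. It does: your argument is correct and is essentially the standard proof from that reference. In the forward direction there is one small bookkeeping slip: from $e_H\oplus e_H=e_H=0\oplus e_H$ you cannot conclude $e_H=0$ by \emph{left} cancellation, since the repeated factor sits on the right; either invoke right cancellation (legitimate, as you note, because right gyrotranslations are bijections of $G$, recorded in Definition~2.7), or, more simply, rewrite the equation as $e_H\oplus e_H=e_H\oplus 0$ and cancel on the left using Proposition~\ref{a}(1). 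Your reverse direction isolates exactly the right content: closure of $H$ under its own gyrations follows from the gyrator identity (Proposition~\ref{a}(4)) together with (1) and (2), and you are right that upgrading $\mbox{gyr}[a,b](H)\subseteq H$ to the equality $\mbox{gyr}[a,b](H)=H$ genuinely requires the inversion law $\mbox{gyr}[a,b]^{-1}=\mbox{gyr}[b,a]$, since an injective self-map of an infinite set need not be onto. This is the same device the paper itself leans on elsewhere: in Theorem~\ref{3dl5} the inclusion $\mbox{gyr}[x,y](H)\subseteq H$ is promoted to equality by citing \cite[Proposition 2.6]{ST} rather than by reproving it. So, modulo the trivially fixable mislabeled cancellation, your proof is complete and supplies precisely the argument the paper outsources to its reference.
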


\begin{proposition}\cite{UA}\label{a}.
Let $(G, \oplus)$ be a gyrogroup. Then for any $x, y, z\in G$,

\smallskip
(1) $(\ominus x)\oplus (x\oplus y)=y$.

\smallskip
(2) $(x\oplus (\ominus y))\oplus \mbox{gyr}[x, \ominus y](y)=x$.

\smallskip
(3) $(x\oplus \mbox{gyr}[x, y](\ominus y))\oplus y=x$.

\smallskip
(4) $\mbox{gyr}[x, y](z)=\ominus (x\oplus y)\oplus (x\oplus (y\oplus z))$.
\end{proposition}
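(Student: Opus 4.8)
The plan is to derive all four identities directly from the gyrogroup axioms (G1)--(G4) together with the fact that each $\mbox{gyr}[a,b]$ is an automorphism of $(G,\oplus)$; beyond plugging suitable triples into the left gyroassociative law (G3), the only real input is a small amount of bookkeeping about gyrations. First I would record the elementary gyration facts that make everything run. Using (G1) and (G3) one checks that $a\oplus z=a\oplus\mbox{gyr}[0,a](z)$ for every $z$, and since the left gyrotranslation $L_a$ is a bijection of $G$ (a basic structural feature of gyrogroups following from (G1)--(G3)), this forces $\mbox{gyr}[0,a]=\mbox{id}_G$. Combining this with the left loop property (G4), namely $\mbox{gyr}[\ominus a\oplus a,a]=\mbox{gyr}[\ominus a,a]$, gives $\mbox{gyr}[\ominus a,a]=\mbox{gyr}[0,a]=\mbox{id}_G$. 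With these in hand, (1) is immediate: by (G3), $(\ominus x)\oplus(x\oplus y)=((\ominus x)\oplus x)\oplus\mbox{gyr}[\ominus x,x](y)=0\oplus y=y$, using (G2) and (G1).

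Next, (4) follows from (1). Reading (G3) as $x\oplus(y\oplus z)=(x\oplus y)\oplus\mbox{gyr}[x,y](z)$ and applying the left cancellation law (1) with $x\oplus y$ in the role of $x$, I left-cancel $x\oplus y$ to obtain $\mbox{gyr}[x,y](z)=\ominus(x\oplus y)\oplus(x\oplus(y\oplus z))$, which is exactly (4). Identity (2) is another one-line substitution into (G3): taking the triple $(x,\ominus y,y)$ yields $x\oplus(\ominus y\oplus y)=(x\oplus\ominus y)\oplus\mbox{gyr}[x,\ominus y](y)$, and the left-hand side collapses to $x\oplus 0=x$ by (G2) and (G1), giving (2).

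The remaining identity (3) is the one I expect to be the real obstacle, because the naive substitution $(x,y,\ominus y)$ into (G3) produces $(x\oplus y)\oplus\mbox{gyr}[x,y](\ominus y)=x$ rather than (3): the outer summand $y$ and the gyrated summand $\mbox{gyr}[x,y](\ominus y)$ occupy the ``wrong'' slots, and interchanging them is precisely what the left gyroassociative law alone does not govern. To handle this I would first derive from (G3) and (G4) the gyroautomorphism inversion law $\mbox{gyr}[a,b]^{-1}=\mbox{gyr}[b,a]$ (equivalently the right gyroassociative law $(a\oplus b)\oplus c=a\oplus(b\oplus\mbox{gyr}[b,a](c))$). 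Writing $\mbox{gyr}[x,y](\ominus y)=\ominus\mbox{gyr}[x,y](y)$, valid since $\mbox{gyr}[x,y]$ is an automorphism and hence preserves inverses, and comparing (3) with (2) applied to $(x,\mbox{gyr}[x,y](y))$, identity (3) reduces after left-cancellation to the nested gyration identity $\mbox{gyr}[x,\ominus\mbox{gyr}[x,y](y)]\circ\mbox{gyr}[x,y]=\mbox{id}_G$ evaluated at $y$, which I would verify using the inversion law together with the loop property (G4). Establishing this gyration bookkeeping is the delicate step; once it is available, (3) drops out as the ``right-hand'' analogue of (2).
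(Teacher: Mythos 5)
Your treatment of (1), (2) and (4) is correct and complete, and it matches the standard development (note that the paper itself gives no proof of this proposition --- it is quoted from Ungar's book \cite{UA} --- so the only benchmark is the literature): injectivity of left gyrotranslations follows from (G2), (G3) and the bijectivity of gyrations, whence $\mbox{gyr}[0,a]=\mbox{id}$, the left loop property (G4) then gives $\mbox{gyr}[\ominus a,a]=\mbox{id}$, and (1), (2), (4) drop out by substitution and left cancellation exactly as you say.

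The genuine gap is in (3), precisely at the step you yourself flag as delicate. Your reduction of (3), via (2) and left cancellation, to the nested identity $\bigl(\mbox{gyr}[x,\ominus\mbox{gyr}[x,y](y)]\circ\mbox{gyr}[x,y]\bigr)(y)=y$ is correct, but it is only an equivalent restatement of (3); all the content now lives in your two unproven claims, namely the inversion law $\mbox{gyr}[a,b]^{-1}=\mbox{gyr}[b,a]$ and the assertion that the nested identity follows from it ``together with the loop property.'' Neither is substantiated, and the second is doubtful as a plan: modulo the identities you already have, the inversion law, the even property $\mbox{gyr}[\ominus a,\ominus b]=\mbox{gyr}[a,b]$, the right loop property $\mbox{gyr}[a,b\oplus a]=\mbox{gyr}[a,b]$, and (3) itself are all equivalent to one another, so elementary manipulation of (G3)/(G4) keeps going in a circle. (For instance, your item (4) plus left cancellation gives $\mbox{gyr}[a,b]^{-1}=\mbox{gyr}[\ominus a,a\oplus b]$, and (G4) turns this into $\mbox{gyr}[a,b]^{-1}=\mbox{gyr}[b,a\oplus b]$; thus the inversion law you propose to ``first derive'' is exactly the right loop property, i.e.\ the same unproven statement in disguise.) To break the circle one needs a genuinely new input. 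One that works: the telescoping identity $\mbox{gyr}[a\oplus b,\mbox{gyr}[a,b](c)]\circ\mbox{gyr}[a,b]=\mbox{gyr}[a,b\oplus c]\circ\mbox{gyr}[b,c]$, verifiable directly from your item (4), (G3) and the automorphism property (it encodes associativity of Ungar's gyrosemidirect product, cf.\ \cite[Theorem 2.27]{UA2005}). Setting $c=\ominus b$ and using $\mbox{gyr}[a,0]=\mbox{gyr}[b,\ominus b]=\mbox{id}$ gives $\mbox{gyr}[a,b]^{-1}=\mbox{gyr}[a\oplus b,\mbox{gyr}[a,b](\ominus b)]$, and then (G4), combined with the already-proven ``naive'' identity $(a\oplus b)\oplus\mbox{gyr}[a,b](\ominus b)=a$, replaces the first argument $a\oplus b$ by $a$, yielding exactly $\mbox{gyr}[a,\ominus\mbox{gyr}[a,b](b)]\circ\mbox{gyr}[a,b]=\mbox{id}$, from which (3) follows by your reduction. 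Until some such argument is supplied, your part (3) is a reformulation rather than a proof.
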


 \maketitle
\section{Perfect mappings between strongly topological gyrogroups }
In this section, we mainly demonstrate that for a locally compact admissible $L$-subgyrogroup of a strongly topological gyrogroup $G$, the natural mapping $\pi$ from $G$ onto the quotient space $G/H$ has some nice local properties. First, We recall the following concept of the coset space of a topological gyrogroup.

Let $(G, \tau, \oplus)$ be a topological gyrogroup and $H$ an $L$-subgyrogroup of $G$. It follows from \cite[Theorem 20]{ST} that $G/H=\{a\oplus H:a\in G\}$ is a partition of $G$. We denote by $\pi$ the mapping $a\mapsto a\oplus H$ from $G$ onto $G/H$. Clearly, for each $a\in G$, we have $\pi^{-1}\{\pi(a)\}=a\oplus H$.
Denote by $\tau (G)$ the topology of $G$. In the set $G/H$, we define a family $\tau (G/H)$ of subsets as follows: $$\tau (G/H)=\{O\subset G/H: \pi^{-1}(O)\in \tau (G)\}.$$

In \cite{BL}, the authors proved the following lemma.

\begin{lemma}\label{t00000}\cite{BL}
Let $(G, \tau, \oplus)$ be a topological gyrogroup and $H$ an $L$-subgyrogroup of $G$. Then the natural homomorphism $\varphi$ from a topological gyrogroup $G$ to its quotient topology on $G/H$ is an open and continuous mapping.
\end{lemma}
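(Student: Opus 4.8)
The plan is to verify continuity and openness separately, since the lemma asserts both. Continuity of the natural mapping $\pi\colon G\to G/H$ (the map denoted $\varphi$ in the statement) is immediate from the construction of $\tau(G/H)$: by definition a set $O\subseteq G/H$ lies in $\tau(G/H)$ exactly when $\pi^{-1}(O)\in\tau(G)$, so the preimage of every open set is open and $\pi$ is continuous essentially by fiat. The real content is openness, and here the strategy is to show that for an arbitrary open $U\subseteq G$ the saturation $\pi^{-1}(\pi(U))$ is open in $G$; once this is known, $\pi(U)\in\tau(G/H)$ follows directly from the definition of the quotient topology.

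To compute the saturation I would first establish the set-theoretic identity $\pi^{-1}(\pi(U))=U\oplus H=\{u\oplus h:u\in U,\ h\in H\}$. The inclusion $\pi^{-1}(\pi(U))\subseteq U\oplus H$ is the easy direction: if $a\oplus H=u\oplus H$ for some $u\in U$, then since $0\in H$ we have $a=a\oplus 0\in a\oplus H=u\oplus H\subseteq U\oplus H$. The reverse inclusion is the crux and is where the $L$-subgyrogroup hypothesis enters. For $u\in U$ and $h\in H$ I want the coset equality $(u\oplus h)\oplus H=u\oplus H$, which yields $\pi(u\oplus h)=\pi(u)\in\pi(U)$ and hence $U\oplus H\subseteq\pi^{-1}(\pi(U))$.

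This coset equality is the main obstacle, and I expect to handle it using the gyroassociative law (G3) together with the defining property of an $L$-subgyrogroup. Given $w\in H$, because $H\leq_{L} G$ we have $\mbox{gyr}[u,h](H)=H$, so there is a unique $z\in H$ with $w=\mbox{gyr}[u,h](z)$; then (G3) gives $(u\oplus h)\oplus w=(u\oplus h)\oplus\mbox{gyr}[u,h](z)=u\oplus(h\oplus z)$. As $w$ ranges over $H$ the element $z$ ranges over $H$, and since left gyrotranslation by $h$ is a bijection of the subgyrogroup $H$ onto itself (closure under $\oplus$ and inversion plus left cancellation, Proposition \ref{a}(1)), we have $h\oplus H=H$. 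Consequently $(u\oplus h)\oplus H=\{u\oplus(h\oplus z):z\in H\}=u\oplus H$, as desired. I would take care here to confirm that $\mbox{gyr}[u,h]|_{H}\colon H\to H$ really is a bijection and that $h\oplus H=H$, both of which rest on $H$ being a subgyrogroup in combination with the $L$-condition.

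Finally, with $\pi^{-1}(\pi(U))=U\oplus H$ in hand, I would write the saturation as a union of right gyrotranslates, $U\oplus H=\bigcup_{h\in H}(U\oplus h)=\bigcup_{h\in H}R_{h}(U)$. Since each right gyrotranslation $R_{h}$ is a homeomorphism of $G$ onto itself, every $R_{h}(U)$ is open, so the union is open in $G$. Therefore $\pi^{-1}(\pi(U))\in\tau(G)$, whence $\pi(U)\in\tau(G/H)$, establishing that $\pi$ is an open mapping and completing the argument.
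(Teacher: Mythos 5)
Your proof is correct and takes essentially the same route as the standard argument in the cited source \cite{BL} (note that this paper states the lemma only as a citation, without reproducing a proof): continuity is immediate from the definition of $\tau(G/H)$, and openness reduces to the saturation identity $\pi^{-1}(\pi(U))=U\oplus H$, which is open as the union of the right gyrotranslates $R_{h}(U)$, $h\in H$. The only difference is cosmetic: you re-derive the coset equality $(u\oplus h)\oplus H=u\oplus H$ from (G3) together with the $L$-condition, whereas one can shortcut this by invoking the partition property of cosets for $L$-subgyrogroups (\cite[Theorem 20]{ST}, already quoted in the paper), since $u\oplus h$ lies in both $(u\oplus h)\oplus H$ and $u\oplus H$, forcing $\pi(u\oplus h)=\pi(u)$.
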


The following concept of an admissible subgyrogroup of a strongly topological gyrogroup was first introduced in \cite{BL1}, which plays an important role in this paper.

A subgyrogroup $H$ of a topological gyrogroup $G$ is called {\it admissible} if there exists a sequence $\{U_{n}:n\in \omega\}$ of open symmetric neighborhoods of the identity $0$ in $G$ such that $U_{n+1}\oplus (U_{n+1}\oplus U_{n+1})\subset U_{n}$ for each $n\in \omega$ and $H=\bigcap _{n\in \omega}U_{n}$. If $G$ is a strongly topological gyrogroup with a symmetric neighborhood base $\mathscr U$ at $0$ and each $U_{n}\in \mathscr U$, we say that the admissible topological subgyrogroup is generated from $\mathscr U$.

The following thereom shows that the admissible topological subgyrogroup generated from $\mathscr U$ of a strongly topological gyrogroup is a closed $L$-subgyrogroup.

\begin{theorem}\label{3dl5}
Suppose that $(G, \tau, \oplus)$ is a strongly topological gyrogroup with a symmetric neighborhood base $\mathscr U$ at $0$. Then each admissible topological subgyrogroup $H$ generated from $\mathscr U$ is a closed $L$-subgyrogroup of $G$.
\end{theorem}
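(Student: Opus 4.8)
The plan is to establish the three assertions separately: that $H$ is a subgyrogroup, that $H$ is an $L$-subgyrogroup, and that $H$ is closed. Throughout, write $\{U_{n}:n\in\omega\}$ for the defining sequence, so that each $U_{n}\in\mathscr U$ is open and symmetric, $U_{n+1}\oplus(U_{n+1}\oplus U_{n+1})\subset U_{n}$, and $H=\bigcap_{n\in\omega}U_{n}$. I will repeatedly use the elementary consequence $U_{n+1}\oplus U_{n+1}\subset U_{n}$, which follows by writing $a\oplus b=0\oplus(a\oplus b)$ and invoking $0\in U_{n+1}$.

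For the subgyrogroup property I would apply the subgyrogroup characterization stated above \cite{ST}: it suffices to verify closure under inversion and under $\oplus$. Closure under inversion is immediate from symmetry, since $x\in H$ gives $x\in U_{n}=\ominus U_{n}$, hence $\ominus x\in U_{n}$, for every $n$. For closure under $\oplus$, take $x,y\in H$ and fix $n$; then $x,y\in U_{n+1}$, so $x\oplus y\in U_{n+1}\oplus U_{n+1}\subset U_{n}$, and letting $n$ vary yields $x\oplus y\in H$.

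The $L$-subgyrogroup property is where the strong hypothesis pays off cheaply. Because every $U_{n}$ lies in $\mathscr U$, the defining property of a strongly topological gyrogroup gives $\mbox{gyr}[a,b](U_{n})=U_{n}$ for all $a,b\in G$; since $\mbox{gyr}[a,b]$ is an automorphism, and in particular a bijection, it commutes with intersections, so $\mbox{gyr}[a,h](H)=\bigcap_{n}\mbox{gyr}[a,h](U_{n})=\bigcap_{n}U_{n}=H$ for every $a\in G$ and $h\in H$. Hence $H\leq_{L}G$.

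The substantive step is closedness, and the plan is to prove the nesting $\overline{U_{n+1}}\subset U_{n}$; granting this, $H=\bigcap_{n}U_{n}=\bigcap_{n}\overline{U_{n}}$ is an intersection of closed sets and therefore closed. To prove the nesting, take $x\in\overline{U_{n+1}}$ and test it against the open neighborhood $x\oplus U_{n+1}$ of $x$. This neighborhood meets $U_{n+1}$, so there are $u,v\in U_{n+1}$ with $x\oplus u=v$. The main obstacle is to solve for $x$ in the absence of associativity; here I would apply Proposition \ref{a}(2) with the pair $(x,\ominus u)$ to obtain $x=(x\oplus u)\oplus\mbox{gyr}[x,u](\ominus u)=v\oplus(\ominus\,\mbox{gyr}[x,u]\,u)$. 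The strong hypothesis then controls the gyration: $\mbox{gyr}[x,u](U_{n+1})=U_{n+1}$ forces $\ominus\,\mbox{gyr}[x,u]\,u\in U_{n+1}$, whence $x\in U_{n+1}\oplus U_{n+1}\subset U_{n}$, as required. This is precisely the point where a careless argument would founder on non-associativity, and it is the interplay of the right-cancellation identity with the gyration-invariance of the $U_{n}$ that makes it go through.
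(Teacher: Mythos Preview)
Your proof is correct and follows essentially the same route as the paper's: you establish $\overline{U_{n+1}}\subset U_{n+1}\oplus U_{n+1}\subset U_{n}$ for closedness and use $\mbox{gyr}[a,b](U_{n})=U_{n}$ for the $L$-subgyrogroup property. The main differences are cosmetic: the paper states the inclusion $\overline{U_{n+1}}\subset U_{n+1}\oplus U_{n+1}$ without justification, whereas you spell out the right-cancellation step via Proposition~\ref{a}(2); and for the $L$-property the paper shows only $\mbox{gyr}[x,y](H)\subset H$ and then cites \cite[Proposition 2.6]{ST} for equality, while you get equality directly by noting that the bijection $\mbox{gyr}[a,h]$ commutes with intersections. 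One minor point: your verification that $H$ is a subgyrogroup is redundant, since by definition an admissible subgyrogroup is already assumed to be a subgyrogroup.
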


\begin{proof}
Since $H$ is generated from $\mathscr U$, there exits a sequence $\{U_{n}:n\in \omega\}$ of open symmetric neighborhoods of the identity $0$ in $G$ such that $U_{n}\in \mathscr U$, $U_{n+1}\oplus (U_{n+1}\oplus U_{n+1})\subset U_{n}$ for each $n\in \omega$ and $H=\bigcap _{n\in \omega}U_{n}$. Obviously, for each $n\in \omega$, $$\overline{U_{n+1}}\subset U_{n+1}\oplus U_{n+1}\subset U_{n+1}\oplus (U_{n+1}\oplus U_{n+1})\subset U_{n},$$ so $H$ is a closed subgyrogroup of $G$. Moreover, for every $x, y\in G$, we have $$gyr[x, y](H)=gyr[x, y](\bigcap _{n\in \omega}U_{n})\subset \bigcap _{n\in \omega}gyr[x, y](U_{n})\subset \bigcap _{n\in \omega}U_{n}=H,$$ and it follows from \cite[Proposition 2.6]{ST} that $gyr[x, y](H)=H$. Therefore, it follows from the definition of $L$-subgyrogroups that $H$ is an $L$-subgyrogroup. Hence $H$ is a closed $L$-subgyrogroup of $G$.
\end{proof}

\begin{theorem}\label{dl1}
Suppose that $(G, \tau, \oplus)$ is a strongly topological gyrogroup with a symmetric neighborhood base $\mathscr U$ at $0$, $H$ is an admissible subgyrogroup generated from $\mathscr U$ and $P$ is a closed symmetric subset of $G$ such that $P$ contains an open neighborhood of $0$ in $G$, and $\overline{P\oplus (P\oplus P)}\cap H$ is compact. Then the restriction $f$ of $\pi$ to $P$ is a perfect mapping from $P$ onto the subspace $\pi(P)$ of $G/H$, where $\pi: G\rightarrow G/H$ is the natural quotient mapping from $G$ onto the quotient space $G/H$.
\end{theorem}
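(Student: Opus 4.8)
The plan is to establish the three defining properties of a perfect map for the restriction $f=\pi|_{P}\colon P\to\pi(P)$: continuity, compactness of all fibres, and closedness. Continuity is immediate, since $f$ is a restriction of the continuous map $\pi$ and is onto $\pi(P)$ by definition. Throughout I use that $H$ is a closed $L$-subgyrogroup (Theorem \ref{3dl5}), so that $\pi$ is open and continuous by Lemma \ref{t00000}, and I abbreviate $K=\overline{P\oplus(P\oplus P)}\cap H$, which is compact by hypothesis; I also use freely that $0\in\mbox{Int}(P)$, that $P$ is symmetric, and the inclusion $P\oplus P\subseteq P\oplus(P\oplus P)$ (valid because $0\in P$). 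For the fibres, fix $b\in P$; then $f^{-1}(\pi(b))=P\cap(b\oplus H)=L_{b}\big(H\cap(\ominus b\oplus P)\big)$ with $L_{b}$ the homeomorphic left gyrotranslation, so it suffices that $H\cap(\ominus b\oplus P)$ be compact. Since $\ominus b\in P$, we have $\ominus b\oplus P\subseteq P\oplus P\subseteq P\oplus(P\oplus P)$, hence $H\cap(\ominus b\oplus P)\subseteq K$; being the intersection of the closed sets $H$ and $L_{\ominus b}(P)$, it is closed in $G$, hence a closed subset of the compact set $K$, and therefore compact.

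The heart of the proof is closedness. Let $A$ be closed in $P$, hence closed in $G$. Because $\pi$ is open and continuous, $\pi^{-1}(\overline{\pi(A)})=\overline{\pi^{-1}(\pi(A))}=\overline{A\oplus H}$, so for $b\in P$ one has $\pi(b)\in\overline{\pi(A)}$ if and only if $b\in\overline{A\oplus H}$. Since the closure of $\pi(A)$ in the subspace $\pi(P)$ equals $\overline{\pi(A)}\cap\pi(P)$, it suffices to prove that every $b\in P\cap\overline{A\oplus H}$ satisfies $\pi(b)\in\pi(A)$. So fix such a $b$ and choose a net with $a_{\alpha}\in A$, $h_{\alpha}\in H$ and $s_{\alpha}:=a_{\alpha}\oplus h_{\alpha}\to b$.

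The key step is to trap the $H$-components inside $K$. By left cancellation (Proposition \ref{a}(1)), $h_{\alpha}=\ominus a_{\alpha}\oplus s_{\alpha}$; setting $r_{\alpha}=\ominus b\oplus s_{\alpha}\to 0$ we have $s_{\alpha}=b\oplus r_{\alpha}$ with $r_{\alpha}\in\mbox{Int}(P)$ eventually, and since $\ominus a_{\alpha}\in P$ and $b\in P$ this gives $h_{\alpha}=\ominus a_{\alpha}\oplus(b\oplus r_{\alpha})\in(P\oplus(P\oplus P))\cap H\subseteq K$ eventually. Passing to a subnet, $h_{\alpha}\to h\in K\subseteq H$. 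To recover the $A$-components, note that by Proposition \ref{a}(3) the unique solution of $x\oplus h_{\alpha}=s_{\alpha}$ is $a_{\alpha}=s_{\alpha}\oplus\mbox{gyr}[s_{\alpha},h_{\alpha}](\ominus h_{\alpha})$; since gyrations are jointly continuous (Proposition \ref{a}(4)), we obtain $a_{\alpha}\to a:=b\oplus\mbox{gyr}[b,h](\ominus h)$, which satisfies $a\oplus h=b$. As $A$ is closed, $a\in A$; and since $H$ is an $L$-subgyrogroup the left cosets partition $G$, so $b=a\oplus h\in a\oplus H$ forces $\pi(b)=\pi(a)\in\pi(A)$, as required.

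I expect closedness to be the main obstacle, and within it the two moves just described: confining the components $h_{\alpha}$ to the fixed compact set $K$ — which is exactly where the symmetry of $P$, the condition $0\in\mbox{Int}(P)$, and the specific grouping $P\oplus(P\oplus P)$ are needed — and continuously recovering the $a_{\alpha}$ from the pairs $(s_{\alpha},h_{\alpha})$. The non-associativity of $\oplus$ is precisely absorbed by the $L$-subgyrogroup property, which makes $a\oplus H=(a\oplus h)\oplus H$, and by the joint continuity of the gyrations furnished by Proposition \ref{a}(4).
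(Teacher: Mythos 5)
Your proof is correct, and while your treatment of the fibres coincides with the paper's (left gyrotranslation plus the inclusion $H\cap(\ominus b\oplus P)\subseteq\overline{P\oplus(P\oplus P)}\cap H$), your closedness argument takes a genuinely different route. The paper argues by contradiction with a separation argument: assuming $f(a)\in\overline{f(M)}\setminus f(M)$, it shows the compact set $(a\oplus H)\cap\overline{P\oplus P}$ is disjoint from the closed set $M$, separates them by a basic neighborhood $W\in\mathscr U$, and then uses openness of $\pi$ to produce a point $m\in M$ lying in $W\oplus\bigl((a\oplus H)\cap\overline{P\oplus P}\bigr)$, a contradiction; crucially, this route needs two coset computations, $(a\oplus H)\oplus H=a\oplus H$ and $(y\oplus a)\oplus H\subseteq y\oplus(a\oplus H)$, whose proofs invoke the $\mbox{gyr}$-invariance of the basic neighborhoods $V_{n}$ with $H=\bigcap_{n}V_{n}$, i.e.\ the strong/admissible structure. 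You instead argue directly: pulling the closure back through the open map ($\pi^{-1}(\overline{\pi(A)})=\overline{A\oplus H}$), taking a net $a_{\alpha}\oplus h_{\alpha}\to b$, trapping the $h_{\alpha}$ in the compact set $K$ eventually, and recovering $a_{\alpha}\to a$ via the continuous right-cancellation formula $a_{\alpha}=s_{\alpha}\oplus\mbox{gyr}[s_{\alpha},h_{\alpha}](\ominus h_{\alpha})$. This buys genuine generality: your argument uses the admissibility and strongness of $G$ only through Theorem \ref{3dl5}, so it in fact proves the statement for any closed $L$-subgyrogroup $H$ of a topological gyrogroup satisfying the compactness hypothesis, whereas the paper's proof is tied to the admissible setting; the cost is the net/subnet machinery and the right cancellation law, which the paper's purely neighborhood-theoretic argument avoids. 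One small point to tighten: Proposition \ref{a}(3) gives the identity $(s\oplus\mbox{gyr}[s,h](\ominus h))\oplus h=s$ but not uniqueness of solutions of $x\oplus h=s$; uniqueness is Ungar's right cancellation, or you can bypass it entirely by computing $s_{\alpha}\oplus\mbox{gyr}[s_{\alpha},h_{\alpha}](\ominus h_{\alpha})=(a_{\alpha}\oplus h_{\alpha})\oplus\mbox{gyr}[a_{\alpha},h_{\alpha}](\ominus h_{\alpha})=a_{\alpha}\oplus(h_{\alpha}\ominus h_{\alpha})=a_{\alpha}$ directly from axioms (G3) and (G4).
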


\begin{proof}
It is clear that $f$ is continuous. First, we claim that $f^{-1}f(a)$ is compact for each $a\in P$. Indeed, from the definition of $f$, we have $f^{-1}f(a)=(a\oplus H)\cap P$. Since the left gyrotranslation ia a homeomorphism, the subspace $(a\oplus H)\cap P$ and $H\cap ((\ominus a)\oplus P)$ are homeomorphic, thus both of them are closed in $G$ by Theorem~\ref{3dl5}. Since $\ominus a\in \ominus P=P$, we have $$H\cap ((\ominus a)\oplus P)\subset H\cap (P\oplus P)\subset \overline{P\oplus (P\oplus P)}\cap H.$$ Hence, $H\cap ((\ominus a)\oplus P)$ is compact and so is the set $f^{-1}f(a)$.

It remains to verify that the $f$ is a closed mapping. Let us fix any closed subset $M$ of $P$ and let $a$ be an any point of $P$ such that $f(a)\in \overline{f(M)}$. We prove that $f(a)\in f(M)$. Suppose not. Then $(a\oplus H)\cap (M\oplus H)\cap P=\emptyset$. Since $H$ is an admissible subgyrogroup generated from $\mathscr U$, by Theorem~\ref{3dl5}, $H$ is an $L$-subgyrogroup. Hence it follows from \cite[Theorem 2.27]{UA2005} that
\begin{eqnarray}
(a\oplus H)\oplus H&=&a\oplus(\bigcup_{x, y\in H}x\oplus \mbox{gyr}[x, a]y)\nonumber\\
&\subset&a\oplus(\bigcup_{x, y\in H}x\oplus \mbox{gyr}^{-1}[a, x]y)\nonumber\\
&\subset&a\oplus(H\oplus H)\nonumber\nonumber\\
&=&a\oplus H,\nonumber
\end{eqnarray}
and thus $(a\oplus H)\oplus H=a\oplus H$. Then $(a\oplus H)\cap M\cap P=\emptyset$, and $(a\oplus H)\cap\overline{(P\oplus P)}\cap M=\emptyset$ since $M\subset P$.
Obviously, $(a\oplus H)\cap \overline{(P\oplus P)}$ is compact. Since $M$ is a closed and disjoint from the compact subset $(a\oplus H)\cap \overline{(P\oplus P)}$, it easily verify that there exists an open neighborhood $W\in \mathscr U$ such that $W\subset P$ and $(W\oplus ((a\oplus H)\cap \overline{(P\oplus P)}))\cap M=\emptyset $.

Since the quotient mapping $\pi $ is open by Lemma~\ref{t00000} and $W\oplus a$ is an open neighborhood of $a$, the set $\pi (W\oplus a)$ is an open neighborhood of $\pi (a)$ in $G/H$. Therefore, the set $\pi (W\oplus a)\cap \pi (M)\neq \emptyset $ and we can fix $m\in M$ and $y\in W$ such that $\pi (m)=\pi (y\oplus a)$, that is, $m\in (y\oplus a)\oplus H$. Then,
\begin{eqnarray}
(y\oplus a)\oplus H&=&y\oplus (a\oplus gyr[a,y](H))\nonumber\\
&=&y\oplus (a\oplus gyr[a,y](\bigcap _{n\in \omega}V_{n}))\nonumber\\
&\subset &y\oplus (a\oplus \bigcap _{n\in \omega}gyr[a,y](V_{n}))\nonumber\\
&=&y\oplus (a\oplus \bigcap _{n\in \omega}V_{n})\nonumber\\
&=&y\oplus (a\oplus H).\nonumber
\end{eqnarray}
Hence, there exists an $h\in H$ such that $a\oplus h=\ominus y\oplus m$. Since $\ominus y\in \ominus W=W\subset P$ and $m\in M\subset P$, we have that $a\oplus h=\ominus y\oplus m\in \overline{(P\oplus P)}$. In addition, $a\oplus h\in a\oplus H$. Hence, $a\oplus h\in ((a\oplus H)\cap \overline{(P\oplus P)})$ and $m\in W\oplus ((a\oplus H)\cap \overline{(P\oplus P)})$. Thus, $M\cap (W\oplus ((a\oplus H)\cap \overline{(P\oplus P)}))\neq \emptyset$, which is a contradiction.

Therefore, $f(a)\in f(M)$ and $f(M)$ is closed in $f(P)$. Then, since $a\in P$ is arbitrarily taken, the mapping $f$ is perfect.
\end{proof}

\begin{theorem}\label{dl2}
Suppose that $(G, \tau, \oplus)$ is a strongly topological gyrogroup with a symmetric neighborhood base $\mathscr U$ at $0$, and suppose that $H$ is a locally compact admissible subgyrogroup generated from $\mathscr U$. Then there exists an open neighborhood $U$ of the identity element $0$ such that $\pi (\overline{U})$ is closed in $G/H$ and the restriction of $\pi $ to $\overline{U}$ is a perfect mapping from $\overline{U}$ onto the subspace $\pi (\overline{U})$, where $\pi: G\rightarrow G/H$ is the natural quotient mapping from $G$ onto the quotient space $G/H$.
\end{theorem}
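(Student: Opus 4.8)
The plan is to reduce everything to Theorem~\ref{dl1} by making one careful choice of neighborhood, and then to spend a little extra effort on the closedness of $\pi(\overline U)$, which is the only part not handed to us directly by Theorem~\ref{dl1}. Since $H$ is closed (Theorem~\ref{3dl5}) and locally compact, I would first fix a symmetric open neighborhood $V$ of $0$ in $G$ with $\overline V\cap H$ compact. Using the joint continuity of $\oplus$, choose a symmetric $U\in\mathscr U$ so small that $(U\oplus U)\oplus((U\oplus U)\oplus(U\oplus U))\subseteq V$, and set $P:=\overline{U\oplus U}$. Here $P$ is symmetric: for $U\in\mathscr U$ the set $U\oplus U$ is symmetric, since $\ominus(a\oplus b)=\mbox{gyr}[a,b](\ominus b\oplus\ominus a)$ and $\mbox{gyr}[a,b](U)=U$ give $\ominus(a\oplus b)\in\mbox{gyr}[a,b](U\oplus U)=U\oplus U$. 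Moreover $P$ contains the open neighborhood $U$ of $0$, and $\overline U\subseteq P$. Because $P\oplus(P\oplus P)\subseteq\overline{(U\oplus U)\oplus((U\oplus U)\oplus(U\oplus U))}\subseteq\overline V$, both $\overline{P\oplus(P\oplus P)}\cap H$ and $\overline{\overline U\oplus(\overline U\oplus\overline U)}\cap H$ are closed subsets of the compact set $\overline V\cap H$, hence compact. Thus Theorem~\ref{dl1} applies to $\overline U$ and to $P$ simultaneously.

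Applying Theorem~\ref{dl1} to $\overline U$ gives at once that $\pi|_{\overline U}$ is a perfect mapping of $\overline U$ onto $\pi(\overline U)$, so the only remaining task is to show $\pi(\overline U)$ is closed in $G/H$. From $\pi(U)\subseteq\pi(\overline U)\subseteq\overline{\pi(U)}$ I get $\overline{\pi(\overline U)}=\overline{\pi(U)}$, so it suffices to prove $\overline{\pi(U)}\subseteq\pi(\overline U)$. Since $\pi$ is open and continuous by Lemma~\ref{t00000}, openness yields $\pi^{-1}(\overline{\pi(U)})=\overline{\pi^{-1}(\pi(U))}=\overline{U\oplus H}$, which reduces the whole closedness question to a containment between saturated subsets of $G$.

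The crux is the inclusion $\overline{U\oplus H}\subseteq P\oplus H=\pi^{-1}(\pi(P))$. Given $x\in\overline{U\oplus H}$, the set $U\oplus x$ is an open neighborhood of $x$, so it meets $U\oplus H$, giving $u_0,u_1\in U$ and $h\in H$ with $u_0\oplus x=u_1\oplus h$. Left cancellation and gyroassociativity (G3) then yield $x=\ominus u_0\oplus(u_1\oplus h)=(\ominus u_0\oplus u_1)\oplus\mbox{gyr}[\ominus u_0,u_1](h)$, and since $\mbox{gyr}[\ominus u_0,u_1](H)=H$ (exactly as in Theorem~\ref{3dl5}) while $\ominus u_0\oplus u_1\in U\oplus U\subseteq P$, we obtain $x\in P\oplus H$. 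Hence $\pi^{-1}(\overline{\pi(U)})\subseteq\pi^{-1}(\pi(P))$, so $\overline{\pi(U)}\subseteq\pi(P)$. Now $\overline U$ is closed in $P$ and, by Theorem~\ref{dl1}, $\pi|_P$ is a closed mapping onto $\pi(P)$, so $\pi(\overline U)$ is closed in the subspace $\pi(P)$; as $\overline{\pi(\overline U)}=\overline{\pi(U)}\subseteq\pi(P)$, the closure of $\pi(\overline U)$ in $G/H$ equals its closure in $\pi(P)$, which is $\pi(\overline U)$ itself. Therefore $\pi(\overline U)$ is closed in $G/H$.

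The step I expect to be the main obstacle is precisely this last containment, and the reason is structural: $\overline U$ need not be compact, so one cannot separate the coset $x\oplus H$ from $\overline U$ by a naive pointwise compactness argument, and $\overline U\oplus H$ is not obviously closed on its own. The device that rescues the argument is to pass to the larger symmetric set $P$, on which $\pi$ is perfect (hence closed), and to pin $\overline{U\oplus H}$ inside $P\oplus H$ using the gyration-invariance $\mbox{gyr}[\,\cdot\,,\cdot\,](H)=H$ of the admissible $L$-subgyrogroup; all the remaining bookkeeping (symmetry of $P$ and the nested product estimate landing in $\overline V$) is routine given the strongly topological structure.
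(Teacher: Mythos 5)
Your proof is correct, and for the only non-routine part --- the closedness of $\pi(\overline U)$ in $G/H$ --- it takes a genuinely different route from the paper's. The paper fixes a single symmetric $U_0$ with $U_0\oplus(U_0\oplus U_0)\subset W$, takes $P=\overline{U_0}$, then proves that $G/H$ is \emph{regular} (using that it is $T_1$ and homogeneous, the homogeneity being quoted from \cite{BL1}), chooses an open $V_0\ni\pi(0)$ with $\overline{V_0}\subset\pi(U_0)$, and sets $U=\pi^{-1}(V_0)\cap U_0$; then $\pi(\overline U)$ is closed in $\pi(P)$ (image of a closed subset of $P$ under the closed map $\pi|_P$) and is trapped inside the closed set $\overline{V_0}\subset\pi(P)$, hence closed in $G/H$. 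You instead keep $U$ fixed and enlarge the perfectness window to $P=\overline{U\oplus U}$, establishing $\overline{\pi(U)}\subset\pi(P)$ by the saturation computation $\pi^{-1}(\overline{\pi(U)})=\overline{U\oplus H}\subset(U\oplus U)\oplus H$, which uses left cancellation, gyroassociativity, the invariance $\mbox{gyr}[x,y](H)=H$ for \emph{all} $x,y\in G$ (which the Theorem~\ref{3dl5} argument indeed supplies --- the bare $L$-subgyrogroup property would not suffice here), and the identity $\pi^{-1}(\overline B)=\overline{\pi^{-1}(B)}$ valid for the open map $\pi$; your endgame ($\pi(\overline U)$ closed in $\pi(P)$ together with $\overline{\pi(\overline U)}\subset\pi(P)$) then coincides with the paper's. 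The paper's route buys the regularity of $G/H$ as a by-product (it is reused, e.g., in the zero-dimensionality theorem), but leans on the coset-space homogeneity from \cite{BL1}; your route is more self-contained and purely algebraic, at the cost of one extra standard gyrogroup fact, the gyrosum inversion law $\ominus(a\oplus b)=\mbox{gyr}[a,b]((\ominus b)\oplus(\ominus a))$, needed for the symmetry of $U\oplus U$ and not among the identities listed in the paper's Proposition~\ref{a} (it is standard in Ungar's work, so the citation is harmless). A small additional payoff of your version is that $U$ may be taken in $\mathscr U$ itself, whereas the paper's $U$ is merely some open neighborhood of $0$.
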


\begin{proof}
First, $H$ is closed in $G$ by \cite[Proposition 2.7]{LF3}. Since $H$ is locally compact, we can find an open neighborhood $V$ of $0$ in $G$ such that $\overline{V\cap H}$ is compact. By the regularity of $G$, we can choose an open neighborhood $W$ of $0$ such that $\overline{W}\subset V$. Hence $\overline{W}\cap H$ is compact. Let $U_{0}$ be an arbitrary symmetric open neighborhood of $0$ such that $U_{0}\oplus (U_{0}\oplus U_{0})\subset W$. By the joint continuity, we have $\overline{U_{0}}\oplus (\overline{U_{0}}\oplus \overline{U_{0}})\subset \overline{U_{0}\oplus (U_{0}\oplus U_{0})}$. Then the set $P=\overline{U_{0}}$ satisfies all restrictions on $P$ in Theorem \ref{dl1}. It follows from by Theorem \ref{dl1} that the restriction of $\pi $ to $P$ is a perfect mapping from $P$ onto the subspace $\pi (P)$.

It follows from Lemma \ref{t00000} that $\pi $ is an open mapping, the set $\pi (U_{0})$ is open in $G/H$. Then we claim that $G/H$ is regular. Indeed, it is obvious that $G/H$ is $T_{1}$ since $H$ is closed. By \cite[Proposition 4.4]{BL1}, $G/H$ is a homogeneous space, hence it suffices to prove that for each open neighborhood $U$ of 0 in $G$ there exists an open neighborhood $V$ of 0 in $G$ such that $\overline{\pi(V)}\subset \pi(U)$. Then for any open neighborhood $U$ of 0 in $G$, one can pick a symmetric open neighborhood $V$ of 0 in $G$ such that $V\oplus V\subset U$. It is easily verified that $\overline{\pi(V)}\subset \pi(U)$.

Since the space $G/H$ is regular, there exists an open neighborhood $V_{0}$ of $\pi (0)$ in $G/H$ such that $\overline{V_{0}}\subset \pi (U_{0})$. Hence $U=\pi ^{-1}(V_{0})\cap U_{0}$ is an open neighborhood of $0$ contained in $P$ such that the restriction $f$ of $\pi$ to $\overline{U}$ is a perfect mapping from $\overline{U}$ onto the subspace $\pi (\overline{U})$. Furthermore, $\pi (\overline{U})$ is closed in $\pi (P)$, and $\pi (\overline{U})\subset \overline{V_{0}}\subset \pi (U_{0})\subset \pi (P)$. Then $\pi (\overline{U})$ is closed in $\overline{V_{0}}$, so that $\pi (\overline{U})$ is closed in $G/H$.
\end{proof}

Recall that a space $X$ is said to be {\it zero-dimensional}
if it has a base consisting of sets which are both open and closed in $X$.

\begin{theorem}
Suppose that $(G, \tau, \oplus)$ is a zero-dimensional strongly topological gyrogroup with a symmetric neighborhood base $\mathscr U$ at $0$, and $H$ is a locally compact admissible subgyrogroup generated from $\mathscr U$. Then the quotient space $G/H$ is also zero-dimensional.
\end{theorem}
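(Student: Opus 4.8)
The plan is to show that the identity coset $\pi(0)$ has a neighborhood base consisting of clopen subsets of $G/H$, and then to invoke the homogeneity of $G/H$ (established in the proof of Theorem~\ref{dl2} via \cite[Proposition 4.4]{BL1}) in order to spread such a base to every point; a base of clopen sets at each point assembles to a base of clopen sets for the whole quotient, which is exactly zero-dimensionality. Recall that $G/H$ is already known to be $T_1$ and regular from the proof of Theorem~\ref{dl2}, so no separation issues remain and the only object to manufacture is the clopen local base at $\pi(0)$.

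The key tool will be Theorem~\ref{dl2}: I fix the open neighborhood $U$ of $0$ it provides, so that $\pi(\overline{U})$ is closed in $G/H$ and the restriction $f=\pi|_{\overline{U}}$ is a perfect, in particular closed, mapping onto $\pi(\overline{U})$. Now let $O$ be an arbitrary open neighborhood of $\pi(0)$ in $G/H$. Since $\pi$ is continuous, $\pi^{-1}(O)\cap U$ is an open neighborhood of $0$ in $G$, and since $G$ is zero-dimensional I can choose a clopen set $C$ of $G$ with $0\in C\subset \pi^{-1}(O)\cap U$. I then propose $\pi(C)$ as the desired clopen neighborhood: it contains $\pi(0)$ because $0\in C$, and it lies inside $O$ because $C\subset\pi^{-1}(O)$ and $\pi$ is surjective, whence $\pi(C)\subset\pi(\pi^{-1}(O))=O$. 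Openness of $\pi(C)$ is immediate, since $\pi$ is an open mapping by Lemma~\ref{t00000} and $C$ is open.

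The step I expect to be the main obstacle is verifying that $\pi(C)$ is closed in $G/H$, and this is exactly where the perfect mapping must be used. Because $C$ is clopen in $G$ and $C\subset U\subset\overline{U}$, the set $C$ is closed in the subspace $\overline{U}$; as $f$ is a closed mapping, $f(C)=\pi(C)$ is then closed in $\pi(\overline{U})$. Combining this with the fact that $\pi(\overline{U})$ is itself closed in $G/H$ shows that $\pi(C)$ is closed in $G/H$, so $\pi(C)$ is clopen. Having produced, for every open $O\ni\pi(0)$, a clopen neighborhood $\pi(C)\subset O$, I obtain a clopen base at $\pi(0)$, and transporting it by homogeneity completes the argument. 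The delicate point to watch is keeping $C$ inside $\overline{U}$, so that the closed-map property of $f$ applies, while simultaneously keeping $C$ inside $\pi^{-1}(O)$, so that $\pi(C)\subset O$; both constraints are met at once by the single choice $C\subset\pi^{-1}(O)\cap U$.
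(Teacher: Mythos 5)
Your proposal is correct and follows essentially the same route as the paper's own proof: fix the neighborhood $U$ from Theorem~\ref{dl2}, use zero-dimensionality of $G$ to pick a clopen set $C$ (the paper calls it $V$) inside $U\cap\pi^{-1}(O)$, get openness of $\pi(C)$ from Lemma~\ref{t00000}, and get closedness by combining the closedness of $\pi|_{\overline{U}}$ with the closedness of $\pi(\overline{U})$ in $G/H$. The only difference is that you make explicit the homogeneity step used to pass from a clopen base at $\pi(0)$ to zero-dimensionality of all of $G/H$, which the paper leaves implicit.
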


\begin{proof}
Let $\pi :G\rightarrow G/H$ be the natural quotient mapping from $G$ onto the quotient space $G/H$. From Theorem \ref{dl2}, there exists an open neighborhood $U$ of the identity element $0$ of $G$ such that $\pi (\overline{U})$ is closed in $G$ and the restriction of $\pi $ to $\overline{U}$ is a perfect mapping from $\overline{U}$ onto the subspace $\pi (\overline{U})$. Let $W$ be an open neighborhood of $\pi(0)$ in $G/H$. Since the space $G$ is zero-dimensional, there exists an open and closed neighborhood $V$ of $0$ such that $V\subset U\cap \pi ^{-1}(W)$. Hence $\pi(V)$ is an open subset of $G/H$. Moreover, since the restriction of $\pi $ to $\overline{U}$ is a closed mapping and $\pi (\overline{U})$ is closed in $G/H$, $\pi(V)$ is closed in $G/H$. It is obvious that $\pi (V)\subset W$. Thus $G/H$ is zero-dimensional.
\end{proof}

Recall that a closed set in a space is called {\it regular closed} if it is the closure of an open subset of this space.

\begin{corollary}\label{tl1}
Assume that $\mathscr P$ is a topological property preserved by preimages of spaces under perfect mappings (in the class of completely regular spaces) and also inherited by regular closed sets. Assume further that $(G, \tau, \oplus)$ is a strongly topological gyrogroup with a symmetric neighborhood base $\mathscr U$ at $0$, $H$ is a locally compact admissible subgyrogroup generated from $\mathscr U$, and the quotient space $G/H$ has the property $\mathscr P$. Then there exists an open neighborhood $U$ of the identity element $0$ such that $\overline{U}$ has the property $\mathscr P$.
\end{corollary}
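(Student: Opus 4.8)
The plan is to apply Theorem~\ref{dl2} directly and then feed its output into the two structural hypotheses on $\mathscr P$. First I would invoke Theorem~\ref{dl2} to obtain an open neighborhood $U$ of $0$ for which $\pi(\overline{U})$ is closed in $G/H$ and the restriction $f=\pi|_{\overline{U}}$ is a perfect mapping of $\overline{U}$ onto the subspace $\pi(\overline{U})$. This is the one nontrivial ingredient, and it is already available, so the remaining work is to verify that the two hypotheses on $\mathscr P$ apply to the pair $(\overline{U},\pi(\overline{U}))$.

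Next I would show that $\pi(\overline{U})$ is a \emph{regular closed} subset of $G/H$, i.e.\ that $\pi(\overline{U})=\overline{\pi(U)}$. Since $\pi$ is continuous, $\pi(\overline{U})\subset\overline{\pi(U)}$; conversely, $\pi(U)\subset\pi(\overline{U})$ and $\pi(\overline{U})$ is closed in $G/H$ by Theorem~\ref{dl2}, so $\overline{\pi(U)}\subset\pi(\overline{U})$. Hence $\pi(\overline{U})=\overline{\pi(U)}$. Because $\pi$ is open by Lemma~\ref{t00000}, the set $\pi(U)$ is open, and therefore $\pi(\overline{U})=\overline{\pi(U)}$ is the closure of an open set, that is, a regular closed subset of $G/H$. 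Since $G/H$ has the property $\mathscr P$ and $\mathscr P$ is inherited by regular closed sets, $\pi(\overline{U})$ has $\mathscr P$.

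Finally I would conclude using the perfect-preimage hypothesis. The map $f\colon\overline{U}\to\pi(\overline{U})$ is perfect by Theorem~\ref{dl2} and its range $\pi(\overline{U})$ has $\mathscr P$; moreover $\overline{U}$ is a subspace of the topological gyrogroup $G$ and $\pi(\overline{U})$ is a subspace of the coset space $G/H$, both of which are completely regular (topological gyrogroups are rectifiable spaces by \cite{CZ}, hence Tychonoff, and so are their coset spaces). Thus the pair lies in the class in which $\mathscr P$ is preserved by perfect preimages, and therefore $\overline{U}$ has the property $\mathscr P$, as required.

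The main obstacle is the identification $\pi(\overline{U})=\overline{\pi(U)}$, which is precisely what upgrades the merely closed image $\pi(\overline{U})$ into a regular closed set; this step crucially combines the openness of $\pi$ (Lemma~\ref{t00000}) with the closedness of $\pi(\overline{U})$ supplied by Theorem~\ref{dl2}. Once that identification is in place, the perfect-mapping preservation and the regular-closed inheritance of $\mathscr P$ fit together mechanically, and the only remaining routine point is the complete regularity of the two spaces involved.
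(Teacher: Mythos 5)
Your overall route is exactly the one the paper intends: the paper states this corollary without any proof, as an immediate consequence of Theorem~\ref{dl2}, and your two substantive steps --- the identification $\pi(\overline{U})=\overline{\pi(U)}$ (combining continuity of $\pi$, openness of $\pi$ from Lemma~\ref{t00000}, and the closedness of $\pi(\overline{U})$ supplied by Theorem~\ref{dl2}) and the subsequent applications of the two hypotheses on $\mathscr P$ --- are correct and are precisely the intended glue.

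The one genuine defect is your justification of the complete regularity of $\pi(\overline{U})$. You argue that topological gyrogroups are rectifiable, hence Tychonoff, ``and so are their coset spaces.'' The last clause is a non sequitur: rectifiability of $G$ does not transfer to the coset space $G/H$ (already for topological groups, coset spaces need not be rectifiable --- e.g.\ $S^{2}$ is a coset space of $SO(3)$ but admits no rectification), and nothing in this paper establishes complete regularity of $G/H$; the proof of Theorem~\ref{dl2} establishes only its \emph{regularity}. For the domain $\overline{U}\subset G$ the clean reference is Lemma~\ref{lllll} (every strongly topological gyrogroup is Tychonoff). For the codomain, the gap can be closed by observing that the parenthetical ``in the class of completely regular spaces'' carries content only for those properties $\mathscr P$ (\v{C}ech-completeness, realcompactness, pseudocompactness) whose very definition presupposes the Tychonoff axiom; for such $\mathscr P$, the hypothesis that $G/H$ has $\mathscr P$ itself forces $G/H$, and hence its subspace $\pi(\overline{U})$, to be Tychonoff. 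For the properties meaningful outside the Tychonoff class (paracompactness, the Lindel\"{o}f property, countable compactness, $\sigma$-compactness, local compactness), perfect-preimage preservation requires no complete regularity hypothesis, so the restriction is vacuous there. Either way, the rectifiability argument should be deleted and replaced by one of these observations (or by a direct proof that $G/H$ is completely regular, which this paper does not contain).
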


Given a space $X$ and a property $\mathscr P$, if each point $x$ of $X$ has an open neighborhood $U(x)$ such that $\overline{U(x)}$ has $\mathscr P$, then we say that $X$ has the property $\mathscr P$ {\it locally} \cite{AA}. It is well known that local compactness, countable compactness, pseudocompactness, paracompactness, the Lindel\"{o}f property, $\sigma $-compactness, $\check{C}$ech-completeness, the Hewitt-Nachbin completeness, and the property of being a $k$-space are all inherited by regular closed sets and preserved by perfect preimages, see \cite[Sections 3.7, 3.10, 3.11]{E}. Therefore, we have the following corollaries.

\begin{corollary}\label{tl1}
Let $(G, \tau, \oplus)$ be a strongly topological gyrogroup with a symmetric neighborhood base $\mathscr U$ at $0$, and let $H$ be a locally compact admissible subgyrogroup generated from $\mathscr U$. If the quotient space $G/H$ has one of the following properties

\smallskip
(1) $G/H$ is locally compact;

\smallskip
(2) $G/H$ is locally countably compact;

\smallskip
(3) $G/H$ is locally pseudocompact;

\smallskip
(4) $G/H$ is locally $\sigma $-compact;

\smallskip
(5) $G/H$ is locally paracompact;

\smallskip
(6) $G/H$ is locally Lindel\"{o}f;

\smallskip
(7) $G/H$ is locally $\check{C}$ech-complete;

\smallskip
(8) $G/H$ is locally realcompact,\\
\smallskip
then $G$ also has the same property.
\end{corollary}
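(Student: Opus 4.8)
The plan is to fix one of the eight properties, call it $\mathscr P$, and exploit that each such $\mathscr P$ is topological, is inherited by regular closed subspaces, and is preserved by preimages under perfect maps (as recorded in the paragraph before the statement). Since every left gyrotranslation $L_{a}$ is a homeomorphism of $G$, the composition $L_{b}\circ L_{a}^{-1}$ is a homeomorphism carrying $a$ to $b$, so $G$ is topologically homogeneous; hence it suffices to produce a single open neighborhood $W$ of $0$ in $G$ whose closure $\overline{W}$ has $\mathscr P$, and then translate it to every point.

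First I would invoke Theorem~\ref{dl2} to fix an open neighborhood $U$ of $0$ for which $f=\pi|_{\overline{U}}\colon\overline{U}\to\pi(\overline{U})$ is perfect and $\pi(\overline{U})$ is closed in $G/H$; recall also from Lemma~\ref{t00000} that $\pi$ is open. Using that $G/H$ is locally $\mathscr P$, choose an open neighborhood $O_{0}$ of $\pi(0)$ with $\overline{O_{0}}$ having $\mathscr P$, and set $O=O_{0}\cap\pi(U)$, an open neighborhood of $\pi(0)$ contained in $\pi(U)$. Since $O$ is open and $O\subset O_{0}$, its closure $\overline{O}$ is the closure in the subspace $\overline{O_{0}}$ of the relatively open set $O$, hence $\overline{O}$ is regular closed in $\overline{O_{0}}$ and therefore has $\mathscr P$.

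The core step is to transport $\mathscr P$ back across $f$. I would put $W=\pi^{-1}(O)\cap U$, an open neighborhood of $0$ with $W\subset U$, so that $\overline{W}$ is closed in $\overline{U}$ and $f|_{\overline{W}}$ is the restriction of a perfect map to a closed set, hence perfect onto its image $f(\overline{W})$. Because $O\subset\pi(U)$ one checks $\pi(W)=O$, so $f(W)=O$ is open; as $f|_{\overline{U}}$ is a closed map and $\pi(\overline{U})$ is closed in $G/H$, the set $f(\overline{W})$ is closed in $G/H$, and being trapped between $O$ and $\overline{f(W)}=\overline{O}$ it must equal $\overline{O}$. Thus $f|_{\overline{W}}\colon\overline{W}\to\overline{O}$ is a perfect surjection onto a space with $\mathscr P$, so $\overline{W}$ has $\mathscr P$ by the perfect-preimage hypothesis, and homogeneity finishes the argument.

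I expect the main obstacle to be exactly this localization: the previously established global version applies only when $G/H$ has $\mathscr P$ outright, so the real work lies in cutting out the right neighborhood $W$ in $G$ whose image-closure is a $\mathscr P$-piece and in verifying the identity $f(\overline{W})=\overline{O}$. This identity is delicate because it simultaneously uses the openness of $\pi$, the closedness of the perfect restriction $f|_{\overline{U}}$, and the closedness of $\pi(\overline{U})$ in $G/H$; arranging $O\subset\pi(U)$ is what makes $\pi(W)=O$ hold on the nose and keeps the trapping argument tight. A secondary point to keep honest is that the ambient completely regular setting required by the perfect-preimage clause is in force for $G$ and its quotient, which is what licenses applying the hypothesis on $\mathscr P$ to the subspaces $\overline{W}$ and $\overline{O}$.
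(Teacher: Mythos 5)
Your proposal is correct, and it rests on exactly the ingredients the paper relies on: Theorem~\ref{dl2}, the openness of $\pi$ from Lemma~\ref{t00000}, inheritance of the listed properties by regular closed sets, their preservation under perfect preimages, and the homogeneity of $G$. The difference is one of packaging: the paper offers no written proof at all, presenting the statement as an immediate consequence of the preceding general corollary (the one producing a neighborhood $U$ of $0$ with $\overline{U}$ having $\mathscr P$ whenever $G/H$ has $\mathscr P$), with $\mathscr P$ implicitly taken to be the \emph{local} property ``locally $Q$''; this tacitly uses two facts the paper never verifies, namely that ``locally $Q$'' is itself regular-closed-hereditary and perfect-preimage-preserved whenever $Q$ is, and that a closed neighborhood of $0$ which is locally $Q$ forces the homogeneous space $G$ to be locally $Q$. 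You instead work with the global property $Q$ directly and perform the localization by hand, and your trapping identity $f(\overline{W})=\overline{O}$ is precisely the point of substance: it guarantees that the image of $\overline{W}$ is the closure of an open set (hence regular closed in $\overline{O_{0}}$), not merely a closed subset, which matters because properties such as pseudocompactness in the list are inherited by regular closed sets but not by arbitrary closed subspaces. So your argument is more self-contained and makes explicit a subtlety the paper glosses over, at the cost of being longer than the paper's one-word ``Therefore''; both routes are sound, modulo the same unaddressed caveat in each (also present in the paper's parenthetical) that the perfect-preimage theorems for some of these properties are stated within the class of completely regular spaces.
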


\begin{corollary}
Let $(G, \tau, \oplus)$ be a strongly topological gyrogroup with a symmetric neighborhood base $\mathscr U$ at $0$, and let $H$ be a locally compact admissible subgyrogroup generated from $\mathscr U$. If the quotient space $G/H$ is a $k$-space, then $G$ is also a $k$-space.
\end{corollary}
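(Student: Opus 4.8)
The plan is to reduce the global $k$-space property of $G$ to the local structure supplied by the perfect-mapping results of this section. Unlike properties (1)--(8) of the preceding corollary, being a $k$-space is not phrased as a local property, so after producing a neighborhood whose closure is a $k$-space I will still need a separate local-to-global argument; this patching step is where the real work lies.

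First I would invoke the general transfer principle. Since the property of being a $k$-space is inherited by regular closed sets and is preserved by perfect preimages (as recorded in the discussion preceding the corollaries), Corollary~\ref{tl1}, applied with $\mathscr P$ taken to be the $k$-space property and with $G/H$ a $k$-space by hypothesis, produces an open neighborhood $U$ of $0$ in $G$ such that the regular closed set $\overline U$ is a $k$-space. Being an open subspace of the $k$-space $\overline U$, the set $U$ is itself a $k$-space.

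Next I would spread this neighborhood over $G$ by homogeneity. For each $a\in G$ the left gyrotranslation $L_a$ is a homeomorphism of $G$ onto itself, so $a\oplus U=L_a(U)$ is an open neighborhood of $a$ that is homeomorphic to $U$ and is therefore again a $k$-space. Thus $\{a\oplus U:a\in G\}$ is an open cover of $G$ all of whose members are $k$-spaces.

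The crux is then the patching lemma: a space admitting an open cover by $k$-subspaces is itself a $k$-space. To prove it I would take any $A\subset G$ that meets every compact subset of $G$ in a relatively closed set and show that $A$ is closed; since the sets $a\oplus U$ form an open cover, it suffices to check that $A\cap(a\oplus U)$ is closed in $a\oplus U$ for every $a$. As $a\oplus U$ is a $k$-space, this in turn reduces to verifying that $A\cap K'$ is closed in $K'$ for each compact $K'\subset a\oplus U$; but any such $K'$ is compact in $G$, so the desired conclusion holds by the assumption on $A$. Hence $A$ is closed and $G$ is a $k$-space. The one point demanding care is that relative closedness transfers correctly between the subspace $a\oplus U$ and all of $G$ along compact sets, which is precisely where the openness of the cover is used.
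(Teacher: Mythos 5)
Your proposal is correct, and its skeleton is the same as the paper's: apply Corollary \ref{tl1} (legitimate, since being a $k$-space is inherited by regular closed sets and preserved by perfect preimages, as the paper records) to get an open neighborhood $U$ of $0$ with $\overline{U}$ a $k$-space, and then pass from this local information to all of $G$. The difference lies in how the local-to-global step is handled: the paper simply cites Engelking (Section 3.3) for the fact that a locally $k$-space is a $k$-space, whereas you prove the patching step from scratch via the open cover $\{a\oplus U : a\in G\}$ by translates, and your verification (closedness can be tested on an open cover, and in each member it can be tested on compact subsets, which are compact in $G$) is sound. What your route costs is one extra standard fact asserted without justification, namely that the open subspace $U$ of the Hausdorff $k$-space $\overline{U}$ is again a $k$-space; this is true (a Hausdorff $k$-space is a quotient of a locally compact Hausdorff space, and the restriction of a quotient map to the full preimage of an open set is again quotient), but it is not obvious. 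You could avoid it entirely by patching with the closed sets $a\oplus\overline{U}$ instead: if $A$ meets every compact subset of $G$ in a relatively closed set, then each $A\cap(a\oplus\overline{U})$ is closed in $a\oplus\overline{U}$ (hence in $G$), and any $x\in\overline{A}$ satisfies $x\in\overline{A\cap(x\oplus U)}\subset A\cap(x\oplus\overline{U})\subset A$; this is essentially the proof of the Engelking lemma the paper invokes. In short, your version buys self-containedness at the price of one hidden standard fact, while the paper's buys brevity through citation.
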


\begin{proof}
Since the property of being a $k$-space is invariant under taking perfect preimages and a locally $k$-space is a $k$-space \cite[Section 3.3]{E}, the result follows from Corollary \ref{tl1} immediately.
\end{proof}

Recall that the tightness of a space $X$ is the minimal cardinal $\tau \geq \omega$ with the property that for every point $x\in X$ and every set $P\subset X$ with $x\in \overline{P}$, there exists a subset $Q$ of $P$ such that $|Q|\leq \tau$ and $x\in \overline{Q}$, see \cite{AA}.

\begin{lemma}\cite[Proposition 4.7.16]{AA}\label{4yl1}
Suppose that $f:X\rightarrow Y$ is a closed continuous mapping of a regular space $X$ onto a space $Y$ of countable tightness. Suppose further that the tightness of every fiber $f^{-1}(y)$, for $y\in Y$, is countable. Then the tightness of $X$ is also countable.
\end{lemma}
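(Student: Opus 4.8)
The plan is to verify the definition of countable tightness directly. Fix a point $x\in X$ and a set $P\subseteq X$ with $x\in\overline{P}$; I must produce a countable $Q\subseteq P$ with $x\in\overline{Q}$. Write $y=f(x)$ and $F=f^{-1}(y)$ for the fiber through $x$. First I would dispose of the \emph{fiber direction}: if $x\in\overline{P\cap F}$, then, since $x\in F$ and the closure of $P\cap F$ computed in the subspace $F$ equals $\overline{P\cap F}\cap F$, the point $x$ lies in the closure of $P\cap F$ taken in $F$; as $t(F)\le\omega$ by hypothesis, there is a countable $Q\subseteq P\cap F\subseteq P$ with $x\in\overline{Q}^{F}\subseteq\overline{Q}$, and we are done. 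So I may assume $x\notin\overline{P\cap F}$, pick an open $U\ni x$ with $U\cap P\cap F=\emptyset$, and replace $P$ by $P\cap U$; since $U$ is a neighbourhood of $x$ this preserves $x\in\overline{P}$, and now $P\cap F=\emptyset$, i.e. $y\notin f(P)$.

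By continuity $y=f(x)\in\overline{f(P)}$, so the hypothesis $t(Y)\le\omega$ yields a countable $S\subseteq f(P)$ with $y\in\overline{S}$. The two facts I would lean on are that a closed continuous map satisfies $f(\overline{A})=\overline{f(A)}$ for every $A\subseteq X$ (images of closures are closures), and the standard closed-map property that for every open $W\supseteq F$ there is an open $V\ni y$ with $f^{-1}(V)\subseteq W$. Lifting $S$ to a countable $Q_{0}\subseteq P$ with $S\subseteq f(Q_{0})$ gives $y\in\overline{f(Q_{0})}=f(\overline{Q_{0}})$, whence $\overline{Q_{0}}\cap F\neq\emptyset$. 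To pin the accumulation down to $x$ itself I would run a recursion of length $\omega$: at stage $n$, if $x\notin\overline{Q_{0}\cup\dots\cup Q_{n}}$, use the regularity of $X$ to choose an open $O_{n}\ni x$ whose closure misses the closed set $\overline{Q_{0}\cup\dots\cup Q_{n}}$, observe that $x\in\overline{P\cap O_{n}}$, and repeat the previous step inside $P\cap O_{n}$ to obtain the next countable batch $Q_{n+1}\subseteq P\cap O_{n}$ whose image again clusters at $y$. Setting $Q=\bigcup_{n}Q_{n}$, the closed-map property together with the countable tightness of $F$ applied to $\overline{Q}\cap F$ would then be used to force $x\in\overline{Q}$.

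The genuine difficulty is precisely this last passage. The datum $y\in\overline{f(Q)}$ only guarantees that $\overline{Q}$ meets the fiber $F$ \emph{somewhere}, whereas I need it to meet $F$ \emph{at} $x$; the transverse approach of $P$ to $x$ and the spread of $Q$ along $F$ must be reconciled. This is exactly where both hypotheses are indispensable: regularity and the closed-map neighbourhood lemma keep the successive batches aimed into shrinking neighbourhoods of $x$ in the transverse direction, while $t(F)\le\omega$ is what lets one move within the fiber toward $x$. A clean way to organise the bookkeeping, which I would likely adopt, is the contrapositive via free sequences: a tightness exceeding $\omega$ would, by Arhangel'skii's theorem for regular spaces, produce a free sequence $\{x_{\alpha}:\alpha<\omega_{1}\}$ in $X$; pushing it down by $f$ and using $f(\overline{A})=\overline{f(A)}$, either a single fiber captures uncountably many terms, giving a free sequence of length $\omega_{1}$ inside a fiber and contradicting $t(F)\le\omega$, or the images may be thinned to be distinct and assembled into a free sequence in $Y$, contradicting $t(Y)\le\omega$. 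The subtle point there, which I would have to treat with care, is that $f$ may identify the disjoint ``head'' and ``tail'' closures of the free sequence; but any such coincidence lies inside a single fiber, and so is again resolved by the countable tightness of the fibers.
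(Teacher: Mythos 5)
The paper does not actually prove this lemma --- it is quoted directly from \cite[Proposition 4.7.16]{AA} --- so your attempt has to be measured against the standard argument, and it fails at exactly the point you yourself flag. In your main line of attack, the countable sets $Q_0,Q_1,\dots$ are produced by an adaptive recursion of length $\omega$: the neighborhood $O_n$ is chosen only after $Q_0\cup\dots\cup Q_n$ is known, and is merely required to have closure disjoint from $\overline{Q_0\cup\dots\cup Q_n}$. Such a countable, adaptively chosen family need not form (or contain) a neighborhood base at $x$ --- countable tightness gives no control whatsoever on the character of $x$ --- so the fiber points you capture in $\overline{Q_{n+1}}\cap F\subseteq\overline{O_n}$ need not cluster at $x$, and nothing forces $x\in\overline{Q}$ for $Q=\bigcup_n Q_n$. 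Applying $t(F)\le\omega$ to $\overline{Q}\cap F$ only produces limits of countable subsets of that set; whether $x$ is such a limit is precisely the question, so the argument is circular at its crucial step. The fallback via free sequences does not rescue it: the implication ``uncountable tightness yields a free sequence of length $\omega_1$'' is Arhangel'ski\v\i's theorem for \emph{compact} Hausdorff spaces; for a general regular space the construction of such a sequence breaks down at limit stages (one cannot keep $x$ in the closure of the part of $P$ lying inside the intersection of all previously chosen neighborhoods), so invoking ``Arhangel'ski\v\i's theorem for regular spaces'' is an unjustified step. Moreover, even granted such a sequence, a closed map may glue the closure of a head segment to the closure of a tail segment, and a single such coincidence pair inside one fiber contradicts nothing about the countable tightness of that fiber.

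The repair --- and it is the actual argument behind \cite[Proposition 4.7.16]{AA} --- is to replace the $\omega$-recursion by a quantification over \emph{all} neighborhoods of $x$, and to apply the tightness of the fiber to the right set. Let $T$ be the set of all points of $F$ that belong to $\overline{B}$ for some countable $B\subseteq P$. Given any open $U\ni x$, regularity provides an open $V$ with $x\in V\subseteq\overline{V}\subseteq U$; since $x\in\overline{P\cap V}$, the closed-map identity $f(\overline{A})=\overline{f(A)}$ together with $t(Y)\le\omega$ yields (after lifting a countable subset of $f(P\cap V)$ clustering at $y$) a countable $B_U\subseteq P\cap V$ with $y\in\overline{f(B_U)}=f(\overline{B_U})$, hence $\emptyset\ne\overline{B_U}\cap F\subseteq\overline{V}\subseteq U$. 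Thus every neighborhood of $x$ meets $T$, i.e.\ $x\in\overline{T}$. Since $T\subseteq F$ and $x\in F$, countable tightness of $F$ gives a countable $T'\subseteq T$ with $x\in\overline{T'}$; choosing for each $z\in T'$ a countable $B_z\subseteq P$ with $z\in\overline{B_z}$ and setting $Q=\bigcup_{z\in T'}B_z$, one obtains a countable $Q\subseteq P$ with $x\in\overline{T'}\subseteq\overline{Q}$. You had all the right ingredients --- the identity $f(\overline{A})=\overline{f(A)}$, regularity-shrinking, $t(Y)$ to reach the fiber, $t(F)$ to finish --- but $t(F)$ must be applied to $T$, the set of fiber points reachable from countable pieces of $P$ inside arbitrarily small neighborhoods of $x$, not to $\overline{Q}\cap F$ for a single countable $Q$ built along a recursion of length $\omega$.
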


\begin{theorem}
Let $(G, \tau, \oplus)$ be a strongly topological gyrogroup with a symmetric neighborhood base $\mathscr U$ at $0$, and let $H$ be a locally compact metrizable admissible subgyrogroup generated from $\mathscr U$. If the tightness of the quotient space $G/H$ is countable, then the tightness of $G$ is also countable.
\end{theorem}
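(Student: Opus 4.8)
The plan is to transport the countable tightness of $G/H$ to $G$ through the perfect mapping supplied by Theorem~\ref{dl2}, and then to promote this from a single neighborhood of $0$ to all of $G$ using homogeneity. First I would apply Theorem~\ref{dl2} (its hypotheses hold since $H$ is a locally compact admissible subgyrogroup generated from $\mathscr U$) to obtain an open neighborhood $U$ of $0$ such that $\pi(\overline{U})$ is closed in $G/H$ and the restriction $f=\pi|_{\overline{U}}:\overline{U}\to\pi(\overline{U})$ is a perfect mapping onto the subspace $\pi(\overline{U})$. The goal is then to apply Lemma~\ref{4yl1} to $f$, so I must check its three hypotheses with $X=\overline{U}$ and $Y=\pi(\overline{U})$.

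For the hypotheses I would argue as follows. First, $\overline{U}$ is regular, being a closed subspace of the regular space $G$ (the regularity of $G$ is already used in the proof of Theorem~\ref{dl2}). Second, the target $Y=\pi(\overline{U})$ has countable tightness, since tightness is monotone with respect to subspaces and $t(G/H)\le\omega$ by assumption. Third, each fiber $f^{-1}(y)$ has countable tightness: writing $y=\pi(a)$ with $a\in\overline{U}$, we have $f^{-1}(y)=(a\oplus H)\cap\overline{U}$, which, exactly as in the proof of Theorem~\ref{dl1} via the left gyrotranslation $L_{\ominus a}$, is homeomorphic to the subspace $H\cap((\ominus a)\oplus\overline{U})$ of $H$. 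Since $H$ is metrizable and metrizability is hereditary, this fiber is metrizable, hence first-countable, hence of countable tightness. Lemma~\ref{4yl1} then yields that $\overline{U}$ has countable tightness.

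It remains to pass from $\overline{U}$ to $G$. Because the left gyrotranslations are homeomorphisms of $G$ onto itself, every point $x\in G$ has an open neighborhood $V_{x}=x\oplus U$ whose closure $\overline{V_{x}}=x\oplus\overline{U}$ is homeomorphic to $\overline{U}$, and therefore has countable tightness. I would then use the localization of tightness: given $A\subseteq G$ and $x\in\overline{A}$, the trace $A\cap V_{x}$ still satisfies $x\in\overline{A\cap V_{x}}$ because $V_{x}$ is a neighborhood of $x$; since $A\cap V_{x}\subseteq\overline{V_{x}}$ and $\overline{V_{x}}$ is countably tight, there is a countable $B\subseteq A\cap V_{x}\subseteq A$ with $x\in\mathrm{cl}_{\overline{V_{x}}}(B)\subseteq\overline{B}$. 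This exhibits a countable subset $B$ of $A$ whose closure contains $x$, so $t(G)\le\omega$, as required.

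The step I expect to require the most care is the verification that the fibers have countable tightness, since it rests on identifying $f^{-1}(\pi(a))$ with a subspace of the metrizable subgyrogroup $H$ through the gyrotranslation $L_{\ominus a}$ (the same homeomorphism exploited in Theorem~\ref{dl1}); once metrizability of the fibers is secured, the remaining ingredients follow from standard facts. The localization argument in the last paragraph, though elementary, is the other point at which one must be precise, because countable tightness is not in general inherited from an arbitrary open cover, and one genuinely needs the neighborhood-trace computation together with the homogeneity of $G$.
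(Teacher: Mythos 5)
Your proposal is correct and follows essentially the same route as the paper: apply Theorem~\ref{dl2} to get the perfect restriction $\pi|_{\overline{U}}$, identify its fibers as subspaces of the metrizable $H$ (hence countably tight), invoke Lemma~\ref{4yl1} to get countable tightness of $\overline{U}$, and then use homogeneity via gyrotranslations to conclude for all of $G$. The paper states these steps more tersely (it does not spell out the fiber identification or the localization of tightness), so your write-up is simply a more detailed version of the same argument.
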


\begin{proof}
By Theorem~\ref{dl2}, there is an open neighborhood $U$ of the identity $0$ in $G$ such that $\overline{U}$ is a preimage of a space of countable tightness under a perfect mapping with metrizable fibers. Then, by Lemma \ref{4yl1}, the tightness of $\overline{U}$ is also countable. Since $U$ is a non-empty open subset of the homogeneous space $G$, the tightness of $G$ is countable.
\end{proof}

A topological gyrogroup is {\it feathered} if it contains a non-empty compact set $K$ of countable character in $G$. In \cite{BL}, the authors proved that every feathered strongly topological gyrogroup is paracompact. Hence we have the following result.

\begin{theorem}
Let $(G, \tau, \oplus)$ be a strongly topological gyrogroup with a symmetric neighborhood base $\mathscr U$ at $0$, and let $H$ be a locally compact admissible subgyrogroup generated from $\mathscr U$. If the quotient space $G/H$ is a feathered space, then $G$ is a paracompact feathered space.
\end{theorem}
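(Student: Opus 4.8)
The plan is to reduce the whole statement to the single assertion that $G$ itself is feathered: once that is known, the theorem of \cite{BL} that every feathered strongly topological gyrogroup is paracompact immediately gives that $G$ is a paracompact feathered space. So the entire argument is organized around manufacturing a non-empty compact subset of $G$ of countable character, and the engine for this is the perfect mapping supplied by Theorem~\ref{dl2}.

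First I would fix, via Theorem~\ref{dl2}, an open neighborhood $U$ of $0$ such that the restriction $f=\pi|_{\overline U}\colon\overline U\to\pi(\overline U)$ is perfect and $\pi(\overline U)$ is closed in $G/H$. Since $\pi$ is open (Lemma~\ref{t00000}), the set $\pi(U)$ is open and non-empty, and $\pi(\overline U)=\overline{\pi(U)}$ is a regular closed subset of $G/H$ with non-empty interior. The next step is to observe that this regular closed set is itself feathered, and here I would exploit the homogeneity of $G/H$ (\cite[Proposition 4.4]{BL1}): if $C$ is a non-empty compact set of countable character in $G/H$, pick $c_0\in C$ and a homeomorphism $h$ of $G/H$ with $h(c_0)=\pi(0)$; then $C_0:=h(C)$ is a compact set of countable character containing $\pi(0)\in\pi(U)$, and $C_0\cap\pi(\overline U)$ is a non-empty compact subset of $\pi(\overline U)$ whose countable character in $\pi(\overline U)$ is inherited from that of $C_0$ in $G/H$ (a countable outer base $\{O_n\}$ of $C_0$ in $G/H$ yields the outer base $\{O_n\cap\pi(\overline U)\}$, because $\pi(\overline U)$ is closed). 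Thus $\pi(\overline U)$ is feathered. Because the property of containing a non-empty compact set of countable character is preserved by perfect preimages — the $f$-preimage of such a set is again compact and, $f$ being closed, is again of countable character (cf.\ \cite[\S3.7]{E}) — it follows that $\overline U$ is feathered, witnessed by a compact set $K\subset\overline U$ which (taking $c_0$ so that $\pi(0)\in C_0$) may be assumed to contain $0$.

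It remains to pass from the local conclusion ``$\overline U$ is feathered'' to the global conclusion ``$G$ is feathered'', and this is the step I expect to be the main obstacle. The difficulty is genuine: the set $K$ has countable character in the subspace $\overline U$, but a countable outer base of $K$ in $\overline U$ need not be an outer base in $G$ at the points of $K$ lying on the boundary $\overline U\setminus U$, and one cannot in general force $K\subset U$, since a coset meeting $U$ may also meet that boundary. I would therefore handle this exactly as in the topological group setting of \cite{AA}: using the homogeneity of $G$ together with the Raikov-type completion theory available for strongly topological gyrogroups, one establishes that a strongly topological gyrogroup which has a feathered neighborhood of $0$ is itself feathered (equivalently, for such gyrogroups ``feathered'' is a local property). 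Granting this local-to-global principle, $G$ is feathered, and \cite{BL} then delivers paracompactness. Accordingly, the technical heart of the proof is precisely this local-to-global reduction, while the perfect-mapping and homogeneity steps described above constitute the routine part.
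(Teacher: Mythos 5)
Your reduction and the first two stages coincide with the paper's own proof: Theorem~\ref{dl2} supplies the perfect mapping $f=\pi|_{\overline U}$ onto the closed set $\pi(\overline U)$, featheredness passes to the closed subspace $\pi(\overline U)$ of $G/H$, and perfect preimages preserve it, so $\overline U$ is feathered; your hands-on verifications of these two preservation facts (the trace of a compact set of countable character on a closed subspace, and the preimage under a closed map) are correct. The divergence is at the step you yourself flag as the ``technical heart'': you do not prove the passage from ``$\overline U$ is feathered'' to ``$G$ is feathered'', but replace it by an appeal to ``Raikov-type completion theory available for strongly topological gyrogroups'' together with homogeneity of $G$. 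That is a genuine gap, not a routine citation. No completion theory for strongly topological gyrogroups is developed in this paper or in its references (the only Raikov-type statement in sight concerns Lindel\"{o}f $P$-gyrogroups), and homogeneity cannot repair the boundary defect: if the compact set $K$ of countable character in $\overline U$ lies in $\overline U\setminus U$, then the translate $(\ominus x)\oplus K$ for $x\in K$ contains $0$ but has countable character only in $(\ominus x)\oplus\overline U$, which again need not be a neighborhood of $0$; the obstruction is translation-invariant. So the ``local-to-global principle'' you grant yourself is essentially the very statement to be proved.

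The paper closes this gap without any such principle, by exploiting what ``feathered space'' means in \cite{AA} (a $p$-space, rather than merely ``contains a non-empty compact set of countable character'', which is the weaker reading your argument uses and the source of your dead end). For feathered spaces one has, besides closed-heredity and invariance under perfect preimages (\cite[Proposition 4.3.36]{AA}), a localization property: given a point $x$ of a feathered space $X$ and an open neighborhood $W$ of $x$ in $X$, there is a compact set $F$ with $x\in F\subset W$ having countable character \emph{in $X$}. Applying this in $X=\overline U$ with $x=0$ and $W=U$ yields a compact $F$ with $0\in F\subset U$ of countable character in $\overline U$; since $U$ is open in $G$, intersecting an outer base of $F$ in $\overline U$ with $U$ gives an outer base of $F$ in $G$, so $G$ is feathered by the paper's definition, and \cite[Corollary 3.15]{BL} then gives that $G$ is a paracompact feathered space. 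Thus the missing idea in your proposal is precisely this localization of countable character inside $U$, which makes the boundary of $U$ irrelevant and renders both the completion theory and the homogeneity of $G$ unnecessary.
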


\begin{proof}
By Theorem \ref{dl2}, there exists an open neighborhood $U$ of the identity element $0$ in $G$ such that $\overline{U}$ is a preimage of a closed subset of $G/H$ under a perfect mapping. Moreover, since the class of feathered spaces is closed under taking closed subspaces, it follows from \cite[Proposition 4.3.36]{AA} that $\overline{U}$ is a feathered space. Therefore, $U$ contains a non-empty compact subspace $F$ with a countable base of neighborhoods in $G$, thus $G$ is a paracompact feathered space by \cite[Corollary 3.15]{BL}.
\end{proof}

A mapping $f:X\rightarrow Y$ is {\it locally perfect} \cite{AA} if, for each $x\in X$, there is an open neighborhood $U$ of $x$ such that the restriction of $f$ to the closure of $U$ is a perfect mapping from $\overline{U}$ to $Y$. Moreover, a mapping $f:X\rightarrow Y$ is said to be {\it compact (Lindel\"{o}f)-covering} or {\it $k$-covering} \cite{AA, E} if, for each compact (Lindel\"{o}f) subspace $F$ of $Y$, there is a compact (Lindel\"{o}f) subspace $K$ of $X$ such that $f(K)=F$.

Since every open locally perfect mapping $f$ from a space $X$ onto a space $Y$ is compact-covering, we have the following result.

\begin{theorem}\label{4dl1}
Let $(G, \tau, \oplus)$ be a strongly topological gyrogroup with a symmetric neighborhood base $\mathscr U$ at $0$, and let $H$ be a locally compact admissible subgyrogroup generated from $\mathscr U$. Then the quotient mapping $\pi :G\rightarrow G/H$ is compact-covering.
\end{theorem}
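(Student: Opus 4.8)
The plan is to verify the two hypotheses of the criterion quoted just before the statement — that $\pi$ is open and that $\pi$ is locally perfect — and then invoke it directly. The openness of $\pi$ is immediate from Lemma~\ref{t00000}, so the substance of the argument is to show that $\pi$ is locally perfect; that is, that every point of $G$ has an open neighborhood whose closure is carried perfectly onto $G/H$ by $\pi$.

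First I would record local perfectness at the identity. Theorem~\ref{dl2} supplies an open neighborhood $U$ of $0$ with $\pi(\overline{U})$ closed in $G/H$ and $\pi|_{\overline{U}}\colon\overline{U}\to\pi(\overline{U})$ perfect. Since a perfect map onto a closed subspace remains perfect when regarded as a map into the whole space (its nonempty fibers are unchanged, and the image of a closed set is closed in $\pi(\overline{U})$, hence closed in $G/H$), the map $\pi|_{\overline{U}}\colon\overline{U}\to G/H$ is perfect. This yields the required neighborhood at $0$.

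Next I would transfer this to an arbitrary $a\in G$ by homogeneity. The key algebraic fact is that in a strongly topological gyrogroup with $H$ generated from $\mathscr U$ one has $\mbox{gyr}[x,y](H)=H$ for all $x,y\in G$, argued exactly as in Theorem~\ref{3dl5} by intersecting the identities $\mbox{gyr}[x,y](U_{n})=U_{n}$; this is stronger than the bare $L$-subgyrogroup condition. Consequently, for every $x\in G$,
\[
a\oplus(x\oplus H)=(a\oplus x)\oplus\mbox{gyr}[a,x](H)=(a\oplus x)\oplus H,
\]
so the left gyrotranslation $L_{a}$ carries cosets to cosets and induces a map $\overline{L}_{a}\colon G/H\to G/H$ satisfying $\pi\circ L_{a}=\overline{L}_{a}\circ\pi$. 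Because $L_{a}$ is a homeomorphism of $G$ and $\pi$ is open and continuous, $\overline{L}_{a}$ is a homeomorphism of $G/H$ (this is the homogeneity of $G/H$ recorded in \cite[Proposition 4.4]{BL1}). Putting $U_{a}=a\oplus U$, which is an open neighborhood of $a$, we have $\overline{U_{a}}=L_{a}(\overline{U})$ and
\[
\pi|_{\overline{U_{a}}}=\overline{L}_{a}\circ\pi|_{\overline{U}}\circ\bigl(L_{a}|_{\overline{U}}\bigr)^{-1},
\]
a composite of a homeomorphism, a perfect map, and a homeomorphism; hence $\pi|_{\overline{U_{a}}}\colon\overline{U_{a}}\to G/H$ is perfect. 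Thus $\pi$ is locally perfect at every point, and being also open, it is compact-covering by the stated criterion.

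The main obstacle, and the only genuinely gyrogroup-theoretic step, is the passage from $0$ to an arbitrary point: one must check that $L_{a}$ descends to $G/H$, which hinges on $\mbox{gyr}[a,x](H)=H$ holding for \emph{all} $x\in G$ rather than merely for $x\in H$. The strong topological structure is precisely what makes this global identity available, and everything else is a formal composition of homeomorphisms with the perfect map furnished by Theorem~\ref{dl2}.
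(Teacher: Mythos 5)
Your proof is correct and takes essentially the same route as the paper: Theorem~\ref{dl2} supplies local perfectness, Lemma~\ref{t00000} supplies openness, and the criterion that an open locally perfect surjection is compact-covering finishes the argument. The paper's own proof is exactly these three sentences; your additional work --- checking that perfectness onto the closed subspace $\pi(\overline{U})$ upgrades to perfectness into $G/H$, and transferring the neighborhood of $0$ to an arbitrary point via $\mbox{gyr}[a,x](H)=H$ and the induced homeomorphism of $G/H$ --- merely makes explicit what the paper leaves implicit in the word ``locally perfect.''
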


\begin{proof}
By Theorem \ref{dl2}, the quotient mapping $\pi $ is locally perfect. From Lemma \ref{t00000}, it follows that the mapping $\pi $ is open. Hence $\pi$ is a compact-covering mapping.
\end{proof}

Finally, we give some applications of Theorem~\ref{4dl1}.

\begin{theorem}\label{4dl2}
Let $(G, \tau, \oplus)$ be a strongly topological gyrogroup with a symmetric neighborhood base $\mathscr U$ at $0$, and let $H$ be a locally compact admissible subgyrogroup generated from $\mathscr U$. Then $H$ has a compact grasp on $G$ if and only if the quotient space $G/H$ is compact.
\end{theorem}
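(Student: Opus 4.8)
The plan is to establish the two implications separately; both reduce to translating the algebraic decomposition furnished by a cross-complement into a statement about the fibers of $\pi$, via the fact that $G/H=\{a\oplus H:a\in G\}$ is a \emph{partition} of $G$ (recalled at the start of this section). Throughout I would use that $\pi$ is continuous and open (Lemma~\ref{t00000}), that $H$ is a closed $L$-subgyrogroup (Theorem~\ref{3dl5}), and that $\pi$ is compact-covering (Theorem~\ref{4dl1}).

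For the forward implication, suppose $B$ is a compact subset of $G$ cross-complementary to $H$, i.e. $G=B\oplus H$. I would first record the key coset identity: for every $b\in G$ and $h\in H$ one has $(b\oplus h)\oplus H=b\oplus H$. Indeed, both sets are blocks of the partition $G/H$, and both contain the point $b\oplus h$ (the left-hand set via $0\in H$, the right-hand set via the element $h$); since distinct blocks are disjoint, the two blocks coincide. Consequently $\pi(b\oplus h)=\pi(b)$ for all $b\in G$, $h\in H$. Now given any $\pi(a)\in G/H$, write $a=b\oplus h$ with $b\in B$, $h\in H$; then $\pi(a)=\pi(b)\in\pi(B)$. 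Hence $G/H=\pi(B)$, which is compact as the continuous image of the compact set $B$.

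For the reverse implication, assume $G/H$ is compact. Since $\pi:G\rightarrow G/H$ is compact-covering by Theorem~\ref{4dl1}, applying this to the compact subspace $F=G/H$ yields a compact subspace $K$ of $G$ with $\pi(K)=G/H$. It remains to check $G=K\oplus H$. Given $a\in G$, there is $k\in K$ with $\pi(a)=\pi(k)$, that is $a\oplus H=k\oplus H$; since $a$ lies in its own block $a\oplus H=k\oplus H=\{k\oplus h:h\in H\}$, we get $a=k\oplus h$ for some $h\in H$, so $a\in K\oplus H$. Thus $G=K\oplus H$ and $K$ is a compact set cross-complementary to $H$, i.e. $H$ has a compact grasp on $G$.

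The only delicate point — which I would isolate as a short lemma — is the coset identity $(b\oplus h)\oplus H=b\oplus H$, which converts the purely algebraic decomposition $G=B\oplus H$ into a statement about the fibers of $\pi$. This hinges on $H$ being an $L$-subgyrogroup (guaranteed by Theorem~\ref{3dl5}), which is exactly what makes $G/H$ a genuine partition; it also explains the asymmetry in the definition of the grasp, since the analogous absorption would fail for a decomposition $G=H\oplus B$ (one would need $h\oplus b\in b\oplus H$, a normality-type condition not available here). Everything else is routine: continuity of $\pi$ handles the forward direction, and the already-proved compact-covering property (built on the perfect-mapping Theorem~\ref{dl2}) handles the converse.
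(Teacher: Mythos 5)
Your proof is correct and follows essentially the same route as the paper: the forward direction reduces $G/H=\pi(G)=\pi(B)$ to continuity of $\pi$ (Lemma~\ref{t00000}), and the reverse direction invokes the compact-covering property of $\pi$ from Theorem~\ref{4dl1}, exactly as the paper does. The only difference is that you spell out the coset identity $(b\oplus h)\oplus H=b\oplus H$ and the passage from $\pi(K)=G/H$ to $G=K\oplus H$, details the paper leaves implicit.
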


\begin{proof}
Suppose that $G=F\oplus H$, where $F$ is compact. Hence $G/H=\pi (G)=\pi (F)$, where $\pi$ is the quotient mapping from $G$ onto $G/H$. Then it follows from Lemma \ref{t00000} that $\pi$ is continuous and the space $G/H$ is compact. The rest of proof just follows from Theorem \ref{4dl1}.
\end{proof}

Since each open locally perfect mapping $f$ from a space $X$ onto a space $Y$ is also Lindel\"{o}f-covering, we use the same proof like in Theorem \ref{4dl1} and Theorem \ref{4dl2} to obtain the following two corollaries.

\begin{corollary}
Let $(G, \tau, \oplus)$ be a strongly topological gyrogroup with a symmetric neighborhood base $\mathscr U$ at $0$, and let $H$ be a locally compact admissible subgyrogroup generated from $\mathscr U$. Then the quotient mapping $\pi :G\rightarrow G/H$ is Lindel\"{o}f-covering.
\end{corollary}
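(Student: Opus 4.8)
The plan is to imitate the proof of Theorem~\ref{4dl1}, simply replacing ``compact'' by ``Lindel\"{o}f'' and invoking the general fact, recorded just before the statement, that every open locally perfect surjection is Lindel\"{o}f-covering. Concretely, I would first apply Theorem~\ref{dl2} to obtain an open neighborhood $U$ of $0$ for which $\pi|_{\overline{U}}$ is a perfect mapping onto the closed set $\pi(\overline{U})$; since the left gyrotranslations are homeomorphisms of $G$ and $G/H$ is homogeneous (as in the proof of Theorem~\ref{dl2}), this local perfectness at $0$ transports to every point, so $\pi$ is locally perfect. I would then quote Lemma~\ref{t00000} to see that $\pi$ is open. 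Granting the general fact, these two properties give the conclusion at once, exactly as in Theorem~\ref{4dl1}.

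Since the only substantive ingredient is that general fact, I would also spell it out. Let $f:X\to Y$ be open and locally perfect and let $F\subseteq Y$ be Lindel\"{o}f. For each $y\in F$ pick $x_{y}\in f^{-1}(y)$ together with an open neighborhood $U(x_{y})$ on whose closure $f$ restricts to a perfect map; by openness each $f(U(x_{y}))$ is an open set containing $y$, so $\{f(U(x_{y})):y\in F\}$ covers $F$, and the Lindel\"{o}f property yields a countable subcover indexed by $\{y_{n}:n\in\omega\}$. Writing $W_{n}=\overline{U(x_{y_{n}})}$ and $B_{n}=f(W_{n})$, the set $B_{n}$ is closed in $Y$ because $f|_{W_{n}}$ is a closed map; setting $K_{n}=W_{n}\cap f^{-1}(F)$ one checks directly that $f(K_{n})=B_{n}\cap F$ and that $K_{n}=(f|_{W_{n}})^{-1}(B_{n}\cap F)$.

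The heart of the matter, and the step I expect to require the most care, is the verification that each $K_{n}$ is Lindel\"{o}f: here $B_{n}\cap F$ is a closed subset of the Lindel\"{o}f space $F$, hence Lindel\"{o}f, and $K_{n}$ is its preimage under the perfect map $f|_{W_{n}}$, so $K_{n}$ is Lindel\"{o}f because perfect maps pull Lindel\"{o}f subspaces back to Lindel\"{o}f subspaces \cite[Sections~3.7--3.8]{E}. Then $K=\bigcup_{n\in\omega}K_{n}$ is a countable union of Lindel\"{o}f spaces and therefore Lindel\"{o}f, while $f(K)=\bigcup_{n\in\omega}(B_{n}\cap F)=F$, the last equality holding because the open sets $f(U(x_{y_{n}}))\subseteq B_{n}$ already cover $F$. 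Thus $K$ witnesses that $f$, and in particular $\pi$, is Lindel\"{o}f-covering.
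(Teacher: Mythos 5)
Your proposal is correct and takes essentially the same route as the paper: the paper's own argument is just that $\pi$ is open (Lemma~\ref{t00000}) and locally perfect (Theorem~\ref{dl2}), combined with the stated fact that every open locally perfect mapping onto a space is Lindel\"{o}f-covering, exactly as in Theorem~\ref{4dl1}. The only difference is that you also supply a (correct) proof of that general fact, which the paper merely asserts, so this is elaboration of the same proof rather than a different approach.
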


\begin{corollary}
Let $(G, \tau, \oplus)$ be a strongly topological gyrogroup with a symmetric neighborhood base $\mathscr U$ at $0$, and let $H$ be a locally compact admissible subgyrogroup generated from $\mathscr U$. Then $H$ has a Lindel\"{o}f grasp on $G$ if and only if the quotient space $G/H$ is Lindel\"{o}f.
\end{corollary}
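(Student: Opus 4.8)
The plan is to mirror the proof of Theorem~\ref{4dl2} essentially verbatim, replacing ``compact'' by ``Lindel\"{o}f'' throughout and invoking the Lindel\"{o}f-covering property of $\pi$ furnished by the preceding corollary in place of Theorem~\ref{4dl1}. Both implications are short once this machinery is in place, so I would treat them in turn.

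For the forward implication, suppose $H$ has a Lindel\"{o}f grasp, so that $G = F\oplus H$ for some Lindel\"{o}f subset $F$ of $G$. The key observation is that $\pi(F\oplus H) = \pi(F)$: for $f\in F$ and $h\in H$ the element $f\oplus h$ lies in the coset $f\oplus H$, so $\pi(f\oplus h) = f\oplus H = \pi(f)$, since the cosets $\{a\oplus H : a\in G\}$ partition $G$. Hence $G/H = \pi(G) = \pi(F\oplus H) = \pi(F)$. As $\pi$ is continuous by Lemma~\ref{t00000} and the continuous image of a Lindel\"{o}f space is Lindel\"{o}f, it follows that $G/H$ is Lindel\"{o}f.

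For the converse, suppose $G/H$ is Lindel\"{o}f. By the preceding corollary, $\pi$ is Lindel\"{o}f-covering; applying this to the Lindel\"{o}f subspace $G/H$ of itself produces a Lindel\"{o}f subspace $K$ of $G$ with $\pi(K) = G/H$. Since the fibers of $\pi$ are exactly the cosets $k\oplus H$, one has $\pi^{-1}(\pi(K)) = K\oplus H$, and $\pi(K) = G/H$ forces $\pi^{-1}(\pi(K)) = G$. Therefore $G = K\oplus H$ with $K$ Lindel\"{o}f, i.e. $H$ has a Lindel\"{o}f grasp on $G$.

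I do not anticipate a genuine obstacle here: once $\pi$ is known to be open and locally perfect (Theorem~\ref{dl2}), and hence Lindel\"{o}f-covering (the preceding corollary), the whole argument is formal, resting only on the two coset identities $\pi(F\oplus H)=\pi(F)$ and $\pi^{-1}(\pi(K))=K\oplus H$, each of which is immediate from the fact that $G/H$ is a partition of $G$ into cosets of the $L$-subgyrogroup $H$. The only conceptual input beyond Theorem~\ref{4dl2} is that Lindel\"{o}fness, like compactness, is preserved by continuous images and is the property transported by the covering map; no new gyrogroup-theoretic difficulty arises.
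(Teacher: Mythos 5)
Your proposal is correct and follows exactly the route the paper intends: the paper proves this corollary by remarking that an open locally perfect mapping is Lindel\"{o}f-covering and then repeating the proof of Theorem~\ref{4dl2} with ``compact'' replaced by ``Lindel\"{o}f'', which is precisely your argument. Your write-up is in fact slightly more careful than the paper's, since you make explicit the two coset identities $\pi(F\oplus H)=\pi(F)$ and $\pi^{-1}(\pi(K))=K\oplus H$ that the paper leaves implicit.
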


 \maketitle
\section{Local paracompactness of strongly topological gyrogroups}
In the section, we will prove that every locally paracompact strongly topological gyrogroup is paracompact, which generalizes the important result in \cite{AU}.

Let $X$ be a nonempty set. The diagonal of $X$ is the set $\Delta =\{(x,x):x\in X\}$. Every set $V\subset X\times X$ that contains $\Delta$ and satisfies the condition $V=-V$ is called {\it an entourage of the diagonal} \cite{E}. The family of all entourages of the diagonal $\Delta \subset X\times X$ will be denoted by $\mathscr D_{X}$. Let $x_{0}\in X$ and $V\in \mathscr D_{X}$, and the set $$B(x_{0}, V)=\{x\in X:|x_{0}-x|<V\}$$ is called {\it the ball with the centre $x_{0}$ and radius $V$} or, briefly, {\it the $V$-ball about $x_{0}$}, see \cite{E}. Let $\mathscr C(V)=\{B(x, V)\}_{x\in X}$ for each $V\in \mathscr D_{X}$. Clearly, each $\mathscr C(V)$ is a cover of $X$. Let $\mathscr V$ be a uniformity on $X$. Then any cover of the set $X$ which has a refinement of the form $\mathscr C(V)$, where $V\in \mathscr V$, is called {\it uniform with respect to $\mathscr V$} \cite{E}. From \cite[Section 8.1]{E}, the collection $\mathcal{C}$ of all covers of a set $X$ which are uniform with respect to a uniformity $\mathscr V$ on the set $X$ has the following properties:

\smallskip
(UC1) If $\mathscr A\in \mathcal{C}$ and $\mathscr A$ is a refinement of a cover $\mathscr B$ of the set $X$, then $\mathscr B\in \mathcal{C}$;

\smallskip
(UC2) For any $\mathscr A_{1},\mathscr A_{2}\in \mathcal{C}$, there exists an $\mathscr A\in \mathcal{C}$ which is a refinement of both $\mathscr A_{1}$ and $\mathscr A_{2}$;

\smallskip
(UC3) For every $\mathscr A\in \mathcal{C}$, there exists a $\mathscr B\in \mathcal{C}$ which is a star refinement of $\mathscr A$;

\smallskip
(UC4) For every pair $x, y$ of distinct points of $X$, there exists an $\mathscr A\in \mathcal{C}$ such that no member of $\mathscr A$ contains both $x$ and $y$.

Suppose that $(G, \tau ,\oplus)$ is a strongly topological gyrogroup with a symmetric open neighborhood base $\mathscr U$ at $0$, then every member $V\in \mathscr U$ determines a cover of $G$: $$\mathcal{C}_{l}(V)=\{x\oplus V\}_{x\in G}.$$ Denote by $\mathcal{C}_{l}$ the collection of all covers of $G$ which have a refinement of the form $\mathcal{C}_{l}(V)$, where $V\in \mathscr U$. First, we give the following theorem.

\begin{theorem}\label{th9}
Suppose that $(G, \tau, \oplus)$ is a strongly topological gyrogroup with a symmetric open neighborhood base $\mathscr U$ at $0$, then $\mathcal{C}_{l}$ has the properties (UC1)-(UC4), and generates a uniformity on the set $G$. Moreover, the topology induced by the uniformity coincides with the original topology of $G$.
\end{theorem}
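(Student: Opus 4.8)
The plan is to verify (UC1)--(UC4) for $\mathcal{C}_{l}$ one at a time, then invoke the standard correspondence between families of covers satisfying these axioms and uniformities (\cite[Section 8.1]{E}) to produce the uniformity, and finally compare neighbourhood filters to identify the induced topology with $\tau$. The two tools I expect to use repeatedly are the cancellation and gyrator identities of Proposition~\ref{a} together with the defining property of a strongly topological gyrogroup: every gyroautomorphism fixes each $V\in\mathscr U$ setwise. Since a gyroautomorphism is a groupoid automorphism, this also yields $\mathrm{gyr}[a,b](V\oplus V)=V\oplus V$, which is precisely what makes the computations close up.

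Property (UC1) is immediate from transitivity of refinement, and (UC2) follows because $\mathscr U$ is a neighbourhood base: given $\mathcal{C}_{l}(V_{1})$ and $\mathcal{C}_{l}(V_{2})$ refining $\mathscr A_{1},\mathscr A_{2}$, any $W\in\mathscr U$ with $W\subset V_{1}\cap V_{2}$ makes $\mathcal{C}_{l}(W)\in\mathcal{C}_{l}$ refine both. For (UC4) I would fix distinct $x,y$ and observe that if some $z\oplus V$ contained both, then writing $p=\ominus z\oplus x$ and $q=\ominus z\oplus y$ (both in $V$) and applying Proposition~\ref{a}(4) together with left cancellation gives $\ominus x\oplus y=\mathrm{gyr}[z,p](\ominus p\oplus q)\in\mathrm{gyr}[z,p](V\oplus V)=V\oplus V$. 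Since $x\neq y$ forces $\ominus x\oplus y\neq 0$, Hausdorffness and joint continuity of $\oplus$ let me pick $V\in\mathscr U$ with $\ominus x\oplus y\notin V\oplus V$, so no member of $\mathcal{C}_{l}(V)$ contains both points.

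I expect the main obstacle to be (UC3), the existence of a star refinement. Given $\mathscr A\in\mathcal{C}_{l}$ with $\mathcal{C}_{l}(V)$ refining it, I would use joint continuity to choose $W\in\mathscr U$ with $(W\oplus W)\oplus W\subset V$ and show that $\mathcal{C}_{l}(W)$ star-refines $\mathcal{C}_{l}(V)$, hence $\mathscr A$. The heart of the argument is that whenever $x\oplus W$ and $y\oplus W$ meet, say $y\oplus w_{1}=x\oplus w_{2}$, left cancellation turns this into $(\ominus x\oplus y)\oplus\mathrm{gyr}[\ominus x,y]w_{1}=w_{2}$; since the gyroautomorphism fixes $W$ setwise, right cancellation (Proposition~\ref{a}) places $\ominus x\oplus y$ in $W\oplus W$, whence $y=x\oplus(\ominus x\oplus y)$. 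Applying (G3) once more to $y\oplus w$ and again erasing the gyration (as it fixes $W$) gives $y\oplus w\in x\oplus((W\oplus W)\oplus W)\subset x\oplus V$, so $\mathrm{St}(x\oplus W,\mathcal{C}_{l}(W))\subset x\oplus V$. The delicate point is the bookkeeping of gyrations so that each one appears applied to a member of $\mathscr U$ (or a finite sum of such), where the strong hypothesis lets me remove it.

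Once (UC1)--(UC4) are established, \cite[Section 8.1]{E} yields a unique uniformity $\mathscr V$ on $G$ whose uniform covers are exactly the members of $\mathcal{C}_{l}$. To finish I would identify the induced topology: a neighbourhood base at $x$ in the uniform topology is $\{\mathrm{St}(x,\mathcal{C}_{l}(V)):V\in\mathscr U\}$, and the computation already used for (UC4) gives the sandwich $x\oplus V\subset\mathrm{St}(x,\mathcal{C}_{l}(V))\subset x\oplus(V\oplus V)$. Since $L_{x}$ is a homeomorphism and, by joint continuity, $\{V\oplus V:V\in\mathscr U\}$ is again a neighbourhood base at $0$, both the inner and the outer families generate the $\tau$-neighbourhood filter at $x$; therefore so do the stars, and $\tau_{\mathscr V}=\tau$.
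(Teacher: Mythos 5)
Your proposal is correct and follows essentially the same route as the paper: verify (UC1)--(UC4) directly, using the strong-gyrogroup property $\mathrm{gyr}[a,b](V)=V$ (and its automorphism consequence for $V\oplus V$) to erase gyrations, with a triple sum $(W\oplus W)\oplus W\subset V$ giving the star refinement, and then identify the induced topology via the stars of points. The only differences are cosmetic bookkeeping (you invoke Proposition~\ref{a}(4) for (UC4) where the paper uses right cancellation, and your sandwich $x\oplus V\subset\mathrm{St}(x,\mathcal{C}_{l}(V))\subset x\oplus(V\oplus V)$ makes explicit the topology comparison that the paper states tersely).
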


\begin{proof}
Clearly, (UC1) is just from the definition of $\mathcal{C}_{l}$.

Then, for any $V_{1}, V_{2}\in \mathscr U$, there exists $V\in \mathscr U$ such that $V\subset V_{1}\cap V_{2}$, so $\mathcal{C}_{l}$ has the property (UC2).

Next, we prove that $\mathcal{C}_{l}$ has the property (UC3), then it suffices to show the following claim.

\smallskip
{\bf Claim:} For every $V\in \mathscr U$ there exists $V_{1}\in \mathscr U$ such that $\mbox{st}(x\oplus V_{1}, \mathcal{C}_{l}(V_{1}))\subset x\oplus V$, for each $x\in G.$

\smallskip
Indeed, since the formula $f(x, y, z)=(x\oplus (\ominus y))\oplus z$ defines a continuous mapping $f: G\times G\times G\rightarrow G$ and $f(0, 0, 0)=0$, for every $V\in \mathscr U$ there exists a $V_{1}\in \mathscr U$ such that $f(V_{1}\times V_{1}\times V_{1})=(V_{1}\oplus V_{1})\oplus V_{1}\subset V$. Take an arbitrary $x\in G$, and if $(x\oplus V_{1})\cap (x_{1}\oplus V_{1})\not =\emptyset$, then there exist $v_{1}, v_{2}\in V_{1}$ such that $x\oplus v_{1}=x_{1}\oplus v_{2}$. Therefore,
\begin{eqnarray}
x_{1}&=&(x_{1}\oplus v_{2})\oplus gyr[x_{1}, v_{2}](\ominus v_{2})\nonumber\\
&=&(x\oplus v_{1})\oplus gyr[x_{1}, v_{2}](\ominus v_{2})\nonumber\\
&\in &(x\oplus v_{1})\oplus gyr[x_{1}, v_{2}](V_{1})\nonumber\\
&\subset &(x\oplus V_{1})\oplus V_{1}\nonumber\\
&\subset&x\oplus (V_{1}\oplus gyr[V_{1}, x](V_{1}))\nonumber\\
&=&x\oplus (V_{1}\oplus V_{1}).\nonumber
\end{eqnarray}
Then for any $v\in V_{1}$ we have
\begin{eqnarray}
x_{1}\oplus v&\in &(x\oplus (V_{1}\oplus V_{1}))\oplus v\nonumber\\
&\subset&x\oplus ((V_{1}\oplus V_{1})\oplus gyr[V_{1}\oplus V_{1},x](v))\nonumber\\
&\subset &x\oplus ((V_{1}\oplus V_{1})\oplus gyr[V_{1}\oplus V_{1},x](V_{1}))\nonumber\\
&=&x\oplus ((V_{1}\oplus V_{1})\oplus V_{1})\nonumber\\
&\subset &x\oplus V,\nonumber
\end{eqnarray}
which implies that the claim holds.

Finally, take any $x, y\in G$ such that $x\not =y$. Clearly, there exists $V\in \mathscr U$ such that $\ominus x\oplus y\not \in V$. We show that no member of the cover $\mathcal{C}_{l}(V_{1})$, where $V_{1}\in \mathscr U$ satisfies $V_{1}\oplus V_{1}\subset V$, contains both $x$ and $y$. Suppose not, then there exists $x_{0}\in G$ such that $x\in x_{0}\oplus V_{1}$ and $y\in x_{0}\oplus V_{1}$. So, there exists $v_{1}\in V_{1}$ such that $x_{0}\oplus v_{1}=x$. Then $$x_{0}=x\oplus gyr[x_{0}, v_{1}](\ominus v_{1})\in x\oplus gyr[x_{0}, v_{1}](V_{1})=x\oplus V_{1}.$$ It follows that $$y\in (x\oplus V_{1})\oplus V_{1}=x\oplus (V_{1}\oplus gyr[V_{1}, x](V_{1}))=x\oplus (V_{1}\oplus V_{1}).$$ Hence, $(\ominus x)\oplus y\in V_{1}\oplus V_{1}\subset V$, which contradicts with the choice of $V$. Therefore, $\mathcal{C}_{l}$ has property (UC4).

Since the collection $\mathcal{C}_{l}$ consists of open covers of $G$, it just need to observe that for every $x\in G$ and neighborhood $U$ of $x$ there exists an $V\in \mathscr U$ such that $x\oplus V\subset U$. Therefore, the topology induced by the uniformity coincides with the original topology of $G$.
\end{proof}

\begin{lemma}\cite{BL1}\label{lllll}
Every strongly topological gyrogroup is a Tychonoff space.
\end{lemma}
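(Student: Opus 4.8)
The plan is to read off the result directly from Theorem~\ref{th9}, which has already carried out the substantive work. That theorem exhibits the original topology of $G$ as the topology induced by the uniformity generated by the collection $\mathcal{C}_{l}$; in other words, it shows that $G$ is uniformizable. Once a space is known to be uniformizable, complete regularity is automatic, and the separation needed to promote ``completely regular'' to ``Tychonoff'' is furnished by property (UC4). So the whole argument reduces to combining Theorem~\ref{th9} with the classical fact that every Hausdorff uniform space is a Tychonoff space, see \cite[Section 8.1]{E}.

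For completeness I would recall why uniformizability yields complete regularity, in the present left-invariant setting. Fix a point $x\in G$ and a closed set $F\subset G$ with $x\notin F$. Because the induced and original topologies coincide (Theorem~\ref{th9}), there is some $V_{0}\in\mathscr U$ with $(x\oplus V_{0})\cap F=\emptyset$, i.e. the $V_{0}$-ball about $x$ misses $F$. Applying property (UC3) repeatedly, I would extract a decreasing sequence $V_{0}\supset V_{1}\supset\cdots$ in $\mathscr U$ such that each $\mathcal{C}_{l}(V_{n})$ is a star refinement of $\mathcal{C}_{l}(V_{n-1})$, and then run the standard Alexandroff--Urysohn construction to produce a continuous pseudometric $d$ subordinate to this chain with $d(x,y)\geq 1$ for all $y\in F$. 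The function $f(y)=\min\{1,d(x,y)\}$ is then continuous in the topology of $G$, with $f(x)=0$ and $f\equiv 1$ on $F$, witnessing complete regularity. The Hausdorff (equivalently $T_{1}$) property comes from (UC4): for distinct $x,y$ there is a member of $\mathcal{C}_{l}$ no element of which contains both, so some basic neighbourhood $x\oplus V$ excludes $y$. Together with complete regularity this gives that $G$ is Tychonoff; alternatively, one may simply invoke Lemma~\ref{lllll}'s citation \cite{BL1}.

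The only genuinely technical ingredient is the pseudometrization step that converts a star-refining chain of the covers $\mathcal{C}_{l}(V_{n})$ into a continuous pseudometric. This is precisely the classical lemma underlying the theorem that uniform spaces are completely regular, and since Theorem~\ref{th9} has already verified (UC1)--(UC4) for $\mathcal{C}_{l}$, I would cite \cite[Section 8.1]{E} rather than reproduce that construction. The main conceptual obstacle has thus already been dispatched inside Theorem~\ref{th9} --- the real content is checking the uniformity axioms for $\mathcal{C}_{l}$, where the gyrogroup identities from Proposition~\ref{a} are used to absorb the $\mathrm{gyr}$ factors; after that, Tychonoff-ness is a purely formal consequence.
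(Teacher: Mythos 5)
Your proposal is correct, but it takes a genuinely different route from the paper: the paper does not prove this lemma at all --- it imports it from \cite{BL1}, where the result is obtained by a direct Pontryagin--Urysohn-style construction of separating continuous functions from a chain of gyr-invariant neighborhoods of $0$. You instead derive the lemma internally from Theorem~\ref{th9}: since that theorem exhibits the original topology of $G$ as the topology induced by a uniformity (the covers $\mathcal{C}_{l}$ satisfying (UC1)--(UC4) in the sense of \cite[Section 8.1]{E}), and since in Engelking's framework the separation axiom (UC4) is part of what makes $\mathcal{C}_{l}$ generate a uniformity, the Tychonoff property follows from the classical fact that the topology induced by a uniformity is Tychonoff. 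This is logically sound and non-circular within the paper: Theorem~\ref{th9} is stated and proved before Lemma~\ref{lllll}, its proof uses only joint continuity, the gyr-invariance of the base $\mathscr U$, and the standing Hausdorff assumption (needed for (UC4)), never complete regularity; and the lemma itself is only invoked afterwards (in Lemma~\ref{yl31}). Your sketch of the pseudometrization step (the decreasing star-refining chain obtained from the Claim in Theorem~\ref{th9}, then the Alexandroff--Urysohn construction) is exactly the standard content hidden in the Engelking citation, so deferring it is legitimate. What each approach buys: yours makes the paper self-contained and recycles work already done in Section 4, at the cost of tying the lemma to Theorem~\ref{th9}; the paper's citation keeps the lemma independent of the uniformity machinery, so it can be quoted freely (as \cite{BL1} does) in contexts where Theorem~\ref{th9} is unavailable. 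The only blemish is your closing remark that one may ``alternatively invoke \cite{BL1}'' --- that is not an alternative proof but a restatement of the paper's citation, and should be dropped from a self-contained argument.
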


\begin{lemma}\label{yl31}
Suppose that $(G,\tau ,\oplus)$ is a strongly topological gyrogroup with a symmetric open neighborhood base $\mathscr U$ at $0$. Then, for each $U\in \mathscr U$, there exists a locally finite open cover $\mathscr{U}$ which refines the cover $\mathcal{C}_{l}(U))$.
\end{lemma}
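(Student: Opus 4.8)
The plan is to deduce the statement from A.~H. Stone's theorem by manufacturing, out of the single member $U\in\mathscr U$, a continuous pseudometric on $G$ whose uniformity is coarser than the left uniformity $\mathcal{C}_{l}$ produced in Theorem~\ref{th9}. First I would, exactly as in the construction of an admissible sequence, use the joint continuity of $\oplus$ together with the fact that $\mathscr U$ is a symmetric open neighbourhood base at $0$ to choose a decreasing sequence $U=U_{0}\supset U_{1}\supset U_{2}\supset\cdots$ with each $U_{n}\in\mathscr U$ and $U_{n+1}\oplus(U_{n+1}\oplus U_{n+1})\subset U_{n}$ for every $n\in\omega$. Since each $U_{n}\in\mathscr U$, every $U_{n}$ is symmetric and satisfies $\mathrm{gyr}[x,y](U_{n})=U_{n}$ for all $x,y\in G$, which is what makes the next step run.

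Next I would show that the countable family $\{\mathcal{C}_{l}(U_{n})\}_{n\in\omega}$ is a base of a uniformity $\mathscr V_{U}$ on $G$. The directedness (UC2) is immediate from $U_{m}\cap U_{n}\supset U_{\max\{m,n\}}$, and the crucial star-refinement property (UC3) is already contained in the Claim proved inside Theorem~\ref{th9}: since $(U_{n+1}\oplus U_{n+1})\oplus U_{n+1}\subset U_{n}$, that very computation shows that $\mathcal{C}_{l}(U_{n+1})$ star-refines $\mathcal{C}_{l}(U_{n})$. Property (UC4) is not needed here, since a pseudometric is permitted to identify distinct points. Hence $\mathscr V_{U}$ has a countable base and is therefore pseudometrizable: by \cite[Theorem 8.1.21]{E} there is a pseudometric $d$ on $G$ inducing $\mathscr V_{U}$, and the two-sided estimate between $d$-balls and the sets $x\oplus U_{n}$ that accompanies that theorem yields a radius $r>0$ with $B_{d}(x,r)\subset x\oplus U$ for all $x\in G$.

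Because every $\mathcal{C}_{l}(U_{n})$ is a member of the full uniformity $\mathcal{C}_{l}$, the uniformity $\mathscr V_{U}$ is coarser than $\mathcal{C}_{l}$; as the topology induced by $\mathcal{C}_{l}$ is exactly $\tau$ by Theorem~\ref{th9}, the pseudometric $d$ is continuous on $(G,\tau)$ and every $d$-ball is $\tau$-open. Now I would invoke Stone's theorem: the pseudometric space $(G,d)$ is paracompact, so the $d$-open cover $\{B_{d}(x,r)\}_{x\in G}$ admits an open (in $d$) and locally finite (in $d$) refinement $\mathscr W$. Since $d$ is $\tau$-continuous, each member of $\mathscr W$ is $\tau$-open and every $d$-neighbourhood is a $\tau$-neighbourhood, so $\mathscr W$ is a locally finite open cover of $(G,\tau)$; and as $\{B_{d}(x,r)\}_{x\in G}$ refines $\mathcal{C}_{l}(U)$, so does $\mathscr W$ by transitivity of refinement. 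This $\mathscr W$ is the required cover.

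The main obstacle is the second paragraph: confirming that the chain $\{U_{n}\}$ genuinely provides a uniformity base, that is, that the gyro-identities force $\mathcal{C}_{l}(U_{n+1})$ to star-refine $\mathcal{C}_{l}(U_{n})$. This is where the non-associativity bites and where one must lean on the invariance $\mathrm{gyr}[x,y](U_{n})=U_{n}$ and on Proposition~\ref{a}, precisely as in the Claim of Theorem~\ref{th9}; once that is granted, the pseudometrization and Stone's theorem are routine. A self-contained alternative, which avoids the metrization citation, is to run Stone's recursive construction directly on $G$: well-order $G$, and at stage $(a,n)$ take the union of those $x\oplus U_{n}$ for which $a$ is the least element with $x\in a\oplus U$, for which $x\oplus U_{n-2}\subset a\oplus U$, and for which $x$ lies in no set constructed at an earlier level; the separation estimates needed to verify local finiteness are again exactly the $U_{n}$-computations described above.
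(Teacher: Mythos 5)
Your proposal is correct, and it rests on the same essential gyrogroup-theoretic ingredient as the paper: the Claim inside the proof of Theorem~\ref{th9}, namely that every cover $\mathcal{C}_{l}(V)$ with $V\in\mathscr U$ admits a star refinement of the same form. Where you genuinely differ is in how that ingredient is converted into a locally finite open refinement. The paper extracts a \emph{single} star refinement and then finishes in one sentence by citing Lemma~\ref{lllll} together with Michael \cite[Lemma 2]{M1953} and Stone \cite[Theorem 1]{ST1948}; read literally, those results concern regular (respectively, fully normal $T_{1}$) spaces, and one star refinement of one special cover does not make $G$ fully normal, so the paper is implicitly relying on the fact that the Claim can be iterated to give $\mathcal{C}_{l}(U)$ a full normal sequence of star refinements. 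Your proof supplies exactly that iteration and then makes the classical machinery explicit: the chain $\{\mathcal{C}_{l}(U_{n})\}_{n\in\omega}$ is a countable base of a covering uniformity (no separation needed), hence is induced by a pseudometric $d$ whose topology is coarser than $\tau$; Stone's theorem for pseudometric spaces gives a $d$-open, $d$-locally finite refinement of $\{B_{d}(x,r)\}_{x\in G}$, and openness, local finiteness and refinement all transfer back to $(G,\tau)$. What this buys is a self-contained argument (modulo the pseudometrization lemma and Stone) that also repairs the looseness in the paper's citation chain; what the paper's version buys is brevity. Two small corrections to your write-up: the citation should be to the pseudometrization lemma for normal sequences of entourages (Engelking, Section 8.1) rather than to Theorem 8.1.21, which is the \emph{metrization} theorem and assumes separation — as you yourself observe, (UC4) may fail for $\mathscr V_{U}$; and the Claim's computation uses $(U_{n+1}\oplus U_{n+1})\oplus U_{n+1}\subset U_{n}$ while your chain gives $U_{n+1}\oplus(U_{n+1}\oplus U_{n+1})\subset U_{n}$, an interchange that is legitimate exactly because $\mbox{gyr}[x,y](U_{n+1})=U_{n+1}$ for all $x,y\in G$ (which you flag); alternatively, you could skip the admissible-type chain entirely and obtain the sequence by iterating the Claim itself, since each $U_{n}$ it produces lies in $\mathscr U$.
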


\begin{proof}
By the claim of the proof of Theorem~\ref{th9}, there exists $V\in \mathscr U$ such that $\mathcal{C}_{l}(V)$ is a star refinement of $\mathcal{C}_{l}(U)$. From Lemma~\ref{lllll}, \cite[Lemma 2]{M1953} and \cite[Theorem 1]{ST1948}, we can see that there exists a locally finite open cover $\mathscr{U}$ which refines the cover $\mathcal{C}_{l}(U))$.
\end{proof}

\begin{lemma}\label{yl32}
Suppose that $(G, \tau, \oplus)$ is a strongly topological gyrogroup with a symmetric open neighborhood base $\mathscr U$ at $0$, $U$ is a non-empty open subset of $G$. Then there exists a locally finite open covering $\gamma$ of $G$ such that $\overline{V}$ is homeomorphic to a closed subspace of $\overline{U}$, for every $V\in \gamma$.
\end{lemma}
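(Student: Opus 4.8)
The plan is to combine the homogeneity of $G$ with the locally finite refinements furnished by Lemma~\ref{yl31}. Since $U$ is an arbitrary nonempty open set, the first step is to translate it onto a neighborhood of the identity. Fixing a point $a\in U$, the left gyrotranslation $L_{\ominus a}$ is a homeomorphism of $G$, and by the left cancellation law (Proposition~\ref{a}(1)) its inverse is $L_{a}$; thus $U_{0}=(\ominus a)\oplus U$ is an open set containing $0$ with $L_{a}(U_{0})=U$. Because left gyrotranslations are homeomorphisms and therefore commute with the closure operator, $L_{a}(\overline{U_{0}})=\overline{U}$, so $L_{a}$ restricts to a homeomorphism of $\overline{U_{0}}$ onto $\overline{U}$. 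Hence it suffices to produce a locally finite open cover $\gamma$ of $G$ with $\overline{V}$ homeomorphic to a closed subspace of $\overline{U_{0}}$ for every $V\in\gamma$.

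Next I would choose a basic neighborhood sitting inside $U_{0}$. Since $\mathscr U$ is a neighborhood base at $0$ and $U_{0}$ is a neighborhood of $0$, pick $W\in\mathscr U$ with $W\subset U_{0}$. Then $\overline{W}\subset\overline{U_{0}}$, and since $\overline{W}$ is closed in $G$ it satisfies $\overline{W}=\overline{W}\cap\overline{U_{0}}$, so $\overline{W}$ is a closed subspace of $\overline{U_{0}}$ (no appeal to regularity is needed here, although the Tychonoff property of Lemma~\ref{lllll} is available if one prefers to insert $\overline{W}\subset U_{0}$). Applying Lemma~\ref{yl31} to $W\in\mathscr U$ yields a locally finite open cover $\gamma$ of $G$ that refines the cover $\mathcal{C}_{l}(W)=\{x\oplus W\}_{x\in G}$; this $\gamma$ will be the covering asserted in the statement.

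It remains to check the homeomorphism condition member by member. Given $V\in\gamma$, the refinement property gives some $x\in G$ with $V\subset x\oplus W$. Since $L_{x}$ is a homeomorphism, $\overline{V}\subset\overline{x\oplus W}=x\oplus\overline{W}$, and $\overline{V}$ is closed in the subspace $x\oplus\overline{W}$. The inverse gyrotranslation $L_{\ominus x}$ carries $x\oplus\overline{W}$ homeomorphically onto $\overline{W}$ and sends $\overline{V}$ onto a closed subset of $\overline{W}$. As $\overline{W}$ is a closed subspace of $\overline{U_{0}}$, this image is a closed subspace of $\overline{U_{0}}$, and transporting it through the homeomorphism $L_{a}\colon\overline{U_{0}}\to\overline{U}$ exhibits $\overline{V}$ as homeomorphic to a closed subspace of $\overline{U}$, as required.

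The genuinely substantive input is Lemma~\ref{yl31}, which already delivers the locally finite open cover; what remains is the bookkeeping that chains together the gyrotranslation homeomorphisms. I expect the only point demanding care to be the verification that a closed subset of $\overline{W}$ is in fact a closed \emph{subspace} of $\overline{U_{0}}$, which rests squarely on the two facts that $\overline{W}$ is closed in $G$ and that left gyrotranslations are homeomorphisms commuting with closure. Beyond this I anticipate no essential obstacle.
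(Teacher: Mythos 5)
Your proof is correct and takes essentially the same route as the paper: both reduce to Lemma~\ref{yl31} and transport the closures of the cover's members into $\overline{U}$ via left gyrotranslation homeomorphisms. The only difference is bookkeeping --- the paper refines the cover $\{x\oplus U\}_{x\in G}$ by translates of $U$ itself (asserting that some $\mathcal{C}_{l}(V)$ with $V\in\mathscr U$ refines it, a step whose justification needs the strong-gyrogroup property and gyroassociativity), whereas you refine $\mathcal{C}_{l}(W)$ for a basic $W\subset(\ominus a)\oplus U$ and compose the two translations $L_{\ominus x}$ and $L_{a}$ at the end, which sidesteps that refinement claim and uses only left cancellation.
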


\begin{proof}
Let $\xi=\{x\oplus U: x\in G\}$. Since $\mathscr U$ is an open neighborhood base at $0$, there exists $V\in \mathscr U$ such that $\eta=\{x\oplus V\}_{x\in G}$ refines $\xi$. We observe that $\eta$ is an open covering of $G$. By Lemma \ref{yl31}, there exists a locally finite open covering $\gamma$ refining $\eta$. Since the closure of $x\oplus U$ is homeomorphic to the closure of $U$, then $\gamma$ is the covering we need.
\end{proof}

The following lemma is well known, see \cite{M1953}.

\begin{lemma}\cite{M1953}\label{yl33}
Let $X$ be a topological space, and $\gamma$ be a locally finite open covering of $X$ such that $\overline{V}$ is paracompact for each $V\in \gamma$. Then $X$ is paracompact.
\end{lemma}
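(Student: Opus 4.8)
The plan is to recognize Lemma~\ref{yl33} as an instance of the classical sum theorem for paracompactness and to reduce it to the standard characterization that a regular space is paracompact if and only if every open cover admits a locally finite closed refinement. First I would replace the open cover $\gamma$ by the closed family $\{\overline{V}:V\in\gamma\}$. Since the closure of a locally finite family is again locally finite and $\bigcup_{V\in\gamma}\overline{V}\supseteq\bigcup_{V\in\gamma}V=X$, this is a locally finite closed cover of $X$ whose members are paracompact by hypothesis. Thus it suffices to show that a space which is the union of a locally finite family of closed paracompact subspaces is paracompact. In the intended application $X=G$ is Tychonoff by Lemma~\ref{lllll}, so the regularity that the closed-refinement characterization needs, and that each subspace $\overline{V}$ inherits, is available.

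Next I would fix an arbitrary open cover $\mathscr{W}$ of $X$ and build a locally finite closed refinement. For each $V\in\gamma$ the traces $\{U\cap\overline{V}:U\in\mathscr{W}\}$ form an open cover of the paracompact (hence regular) subspace $\overline{V}$, so they admit a refinement $\mathscr{A}_{V}$ that is locally finite in $\overline{V}$ and consists of sets closed in $\overline{V}$; as $\overline{V}$ is closed in $X$, every member of $\mathscr{A}_{V}$ is closed in $X$. I would then upgrade the local finiteness: a point $x\notin\overline{V}$ has the open neighbourhood $X\setminus\overline{V}$, which is disjoint from all members of $\mathscr{A}_{V}$, while a point of $\overline{V}$ has a relatively open neighbourhood meeting only finitely many members of $\mathscr{A}_{V}$, and that relative neighbourhood is the trace on $\overline{V}$ of an $X$-open set meeting exactly the same members (since each member lies in $\overline{V}$). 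Hence each $\mathscr{A}_{V}$ is in fact locally finite in $X$.

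I would then put $\mathscr{A}=\bigcup_{V\in\gamma}\mathscr{A}_{V}$, which covers $X$ and refines $\mathscr{W}$. To check local finiteness, fix $x\in X$ and use local finiteness of $\gamma$ to choose a neighbourhood $O$ of $x$ meeting only finitely many $\overline{V_{1}},\dots,\overline{V_{k}}$; because every member of $\mathscr{A}_{V}$ sits inside $\overline{V}$, only $\mathscr{A}_{V_{1}},\dots,\mathscr{A}_{V_{k}}$ can meet $O$. Intersecting $O$ with the finitely many $X$-neighbourhoods furnished by the previous step gives a neighbourhood of $x$ meeting only finitely many members of $\mathscr{A}$. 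Thus $\mathscr{W}$ has a locally finite closed refinement, and regularity of $X$ then yields paracompactness through the cited characterization.

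The hard part will not be any single computation but the bookkeeping at the interface between the closed subspaces $\overline{V}$ and the ambient space $X$: one must convert local finiteness and closedness \emph{in} $\overline{V}$ into the same properties \emph{in} $X$, and then splice the possibly uncountably many families $\mathscr{A}_{V}$ together without destroying local finiteness, which is exactly where local finiteness of $\gamma$ is used. The essential nontrivial input is the regular-space characterization of paracompactness by locally finite closed refinements; this is the reason the statement is genuinely about (regular) spaces rather than a purely set-theoretic manipulation of covers, and it is where the Tychonoff property of $G$ from Lemma~\ref{lllll} enters when the lemma is applied.
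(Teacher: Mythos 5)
A point of comparison first: the paper gives no proof of this lemma at all --- it is quoted from Michael \cite{M1953} as a known result --- so your proposal can only be measured against the classical argument, not against anything in the paper. Measured that way, your proof is essentially the standard one, and its combinatorial core is correct: replacing $\gamma$ by the locally finite closed cover $\{\overline{V}:V\in\gamma\}$; taking, inside each paracompact Hausdorff (hence regular) subspace $\overline{V}$, a refinement of the traces of $\mathscr{W}$ that is closed and locally finite in $\overline{V}$; promoting both properties from $\overline{V}$ to $X$ (closed-in-closed is closed, the complement $X\setminus\overline{V}$ takes care of points outside $\overline{V}$, and an $X$-open set meets a subset of $\overline{V}$ iff its trace does); and finally splicing the families $\mathscr{A}_V$ together, where local finiteness of $\{\overline{V}\}$ is exactly what keeps the union locally finite. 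Every one of these steps checks out, and you correctly identify the essential input, namely Michael's characterization of paracompactness of regular spaces by locally finite closed refinements.

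The one soft spot is the separation axiom, which you flag but resolve only by deferring to the application. The lemma as stated (under the paper's blanket Hausdorff convention) assumes nothing like regularity of $X$, while your final step needs $X$ regular to pass from the locally finite closed refinement to paracompactness. Deferring to Lemma \ref{lllll} is legitimate for the paper's purposes, since the lemma is only ever applied to $X=G$, which is Tychonoff; but it leaves the lemma itself unproved in its stated generality. The gap can be closed intrinsically, with no appeal to the ambient gyrogroup: each $\overline{V}$ is paracompact and Hausdorff, hence normal, and the locally finite closed sum theorem for \emph{normality} (provable by well-ordering the cover and transfinitely extending Urysohn functions, using Tietze on each piece and local finiteness at limit stages) shows that $X$ is normal; since $X$ is $T_1$, it is then regular, and your argument applies verbatim. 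Adding that one observation --- that regularity of $X$ is derivable from the hypotheses rather than an extra assumption --- would make your proof self-contained at the level of generality the statement claims.
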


Since every closed subspace of a paracompact space is also paracompact, it follows from Lemma \ref{yl32} and Lemma \ref{yl33} that we have the following main result in this section.

\begin{theorem}\label{th10}
Every locally paracompact strongly topological gyrogroup is paracompact.
\end{theorem}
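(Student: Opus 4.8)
The plan is to reduce everything to the two preparatory lemmas already established, namely Lemma~\ref{yl32} and Lemma~\ref{yl33}, together with the elementary fact that closed subspaces of paracompact spaces are paracompact. So let $(G,\tau,\oplus)$ be a locally paracompact strongly topological gyrogroup with a symmetric open neighborhood base $\mathscr U$ at $0$. By the definition of having paracompactness locally, each point of $G$ admits an open neighborhood whose closure is paracompact; in particular I would fix a single nonempty open set $U$ in $G$ such that $\overline{U}$ is paracompact. (One can take $U$ to be a neighborhood of $0$ directly; since left gyrotranslations are homeomorphisms by the definition of the gyrotranslation maps, local paracompactness witnessed at any one point transfers to $0$, but for what follows all that is needed is that \emph{some} open $U$ has paracompact closure, which is exactly what local paracompactness supplies.)

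Next I would feed this $U$ into Lemma~\ref{yl32}. That lemma produces a locally finite open covering $\gamma$ of the whole space $G$ with the crucial property that for every $V\in\gamma$ the closure $\overline{V}$ is homeomorphic to a closed subspace of $\overline{U}$. Since $\overline{U}$ is paracompact by our choice of $U$, and since every closed subspace of a paracompact space is again paracompact, each $\overline{V}$ is paracompact for $V\in\gamma$. Thus we have exhibited a locally finite open cover $\gamma$ of $G$ all of whose members have paracompact closures.

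Finally, I would apply Lemma~\ref{yl33} verbatim: a topological space carrying a locally finite open covering with paracompact closures is itself paracompact. Applying this to $X=G$ and the cover $\gamma$ yields that $G$ is paracompact, which is the desired conclusion.

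As for the main obstacle, the present theorem is essentially the bookkeeping step that assembles the machinery; the substantive difficulty has already been isolated in Lemma~\ref{yl32}. If I were forced to prove the statement from scratch, the hard part would be precisely that lemma: one must manufacture a \emph{locally finite} open cover of $G$ whose members are uniformly small, in the sense that their closures all embed as closed subspaces of a single $\overline{U}$. This rests on the left uniformity $\mathcal{C}_{l}$ constructed in Theorem~\ref{th9}, on the star-refinement property (UC3) to pass from the cover $\{x\oplus U\}_{x\in G}$ to a finer cover $\{x\oplus V\}_{x\in G}$, and on the classical Stone--Michael-type result that a uniform open cover of a Tychonoff space admits a locally finite open refinement; the Hausdorff/Tychonoff hypothesis enters through Lemma~\ref{lllll}. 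Once that refinement is in hand, the homogeneity of $G$ guarantees that $\overline{x\oplus U}$ is homeomorphic to $\overline{U}$, so the closures of the refinement members sit inside translates of $\overline{U}$ as closed subspaces, and the remaining argument is the routine application of Lemma~\ref{yl33} sketched above.
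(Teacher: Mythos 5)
Your proof is correct and follows essentially the same route as the paper: the paper's own argument is precisely the observation that local paracompactness gives a nonempty open $U$ with paracompact closure, Lemma~\ref{yl32} then yields a locally finite open cover whose members have closures homeomorphic to closed (hence paracompact) subspaces of $\overline{U}$, and Lemma~\ref{yl33} finishes. Your closing remarks correctly identify Lemma~\ref{yl32} (via Theorem~\ref{th9} and the Stone--Michael machinery) as where the real work lives.
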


\begin{corollary}\cite{AU}
Every locally paracompact topological group is paracompact.
\end{corollary}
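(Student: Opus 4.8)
The plan is to assemble the final theorem directly from the lemmas already established in this section, since the heavy lifting has been distributed across Lemmas~\ref{yl31}, \ref{yl32}, and \ref{yl33}. Suppose $(G,\tau,\oplus)$ is a locally paracompact strongly topological gyrogroup with symmetric open neighborhood base $\mathscr U$ at $0$. By the definition of local paracompactness, the identity $0$ has an open neighborhood $U$ such that $\overline{U}$ is paracompact; by homogeneity (every left gyrotranslation $L_x$ is a homeomorphism of $G$ onto itself, so local paracompactness at one point transfers to all points), we may fix this single $U$ as a witness.

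The central step is to apply Lemma~\ref{yl32} to this open set $U$: it produces a locally finite open covering $\gamma$ of $G$ with the property that for every $V\in\gamma$, the closure $\overline{V}$ is homeomorphic to a closed subspace of $\overline{U}$. Here is where the two remaining ingredients combine. Since $\overline{U}$ is paracompact and every closed subspace of a paracompact space is paracompact, each $\overline{V}$ (being homeomorphic to a closed subspace of $\overline{U}$) is itself paracompact. Thus $\gamma$ is a locally finite open covering of $G$ all of whose members have paracompact closures, which is exactly the hypothesis of Lemma~\ref{yl33}. Applying that lemma yields that $G$ is paracompact, completing the proof.

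I would present this as a short three-sentence argument: first invoke local paracompactness together with homogeneity to secure a single open $U$ with $\overline{U}$ paracompact; second feed $U$ into Lemma~\ref{yl32} to obtain the covering $\gamma$ and observe that closed subspaces of the paracompact $\overline{U}$ are paracompact, so each $\overline{V}$ is paracompact; third conclude via Lemma~\ref{yl33}. The corollary for topological groups then follows immediately, since every topological group is trivially a strongly topological gyrogroup (the gyrations are all the identity map, so any neighborhood base at $0$ witnesses the strong condition).

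The only point requiring genuine care, rather than routine citation, is the implicit use of homogeneity to reduce local paracompactness witnessed at an arbitrary point to paracompactness of $\overline{U}$ for a neighborhood $U$ of the identity $0$ specifically; this is what lets Lemma~\ref{yl32}, which is stated for an open set $U$ but whose conclusion relies on translating $U$ around $G$ via the covering $\{x\oplus U\}_{x\in G}$, interact correctly with the paracompactness hypothesis. I expect no real obstacle here, because the homogeneity of $G$ under left gyrotranslations is already established earlier and the covering in Lemma~\ref{yl32} is built precisely from gyrotranslates of $U$, so the paracompactness of $\overline{U}$ propagates uniformly to every $\overline{V}$; the main subtlety is simply to state clearly that a witness at any single point suffices.
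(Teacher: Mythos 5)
Your proposal is correct and takes essentially the same route as the paper: the paper proves Theorem~\ref{th10} precisely by feeding the paracompact-closure neighborhood into Lemma~\ref{yl32}, noting that closed subspaces of paracompact spaces are paracompact, and invoking Lemma~\ref{yl33}, with the corollary for topological groups following at once because every topological group is a strongly topological gyrogroup. One small remark: the appeal to homogeneity is redundant, since the paper's definition of local paracompactness already demands a witness at \emph{every} point, so a neighborhood of $0$ with paracompact closure is available by hypothesis.
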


From Corollary \ref{tl1} and Theorem~\ref{th10}, we also have the following corollary.

\begin{corollary}
Let $(G, \tau, \oplus)$ be a strongly topological gyrogroup with a symmetric neighborhood base $\mathscr U$ at $0$, and let $H$ be a locally compact admissible subgyrogroup generated from $\mathscr U$. If the quotient space $G/H$ is locally paracompact, then $G$ is paracompact.
\end{corollary}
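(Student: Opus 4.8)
The plan is to obtain the conclusion as an immediate two-step composition of the two results cited just above, requiring no new technical machinery. First I would transfer local paracompactness from the quotient $G/H$ up to $G$, and then I would invoke the intrinsic local-to-global result for strongly topological gyrogroups to upgrade local paracompactness of $G$ to full paracompactness.

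For the first step, I would apply Corollary \ref{tl1} to the property of being locally paracompact, which is precisely item (5) in its list. Its hypotheses hold verbatim here: $G$ is a strongly topological gyrogroup with a symmetric neighborhood base $\mathscr U$ at $0$, and $H$ is a locally compact admissible subgyrogroup generated from $\mathscr U$. The one point to keep in mind is that paracompactness is simultaneously inherited by regular closed subsets and preserved by perfect preimages, as recorded in the paragraph preceding Corollary \ref{tl1} (citing \cite[Sections 3.7, 3.10, 3.11]{E}); this is exactly what places local paracompactness within the scope of that corollary. Since $G/H$ is locally paracompact by assumption, Corollary \ref{tl1} then yields that $G$ is locally paracompact as well.

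For the second step, I would simply apply Theorem \ref{th10}, which states that every locally paracompact strongly topological gyrogroup is paracompact. As $G$ is by hypothesis a strongly topological gyrogroup and was just shown to be locally paracompact, Theorem \ref{th10} delivers at once that $G$ is paracompact, completing the argument.

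The genuine obstacles of this circle of results were absorbed earlier, not at this final stage: namely, the construction of the perfect mapping $\pi|_{\overline{U}}$ in Theorem \ref{dl2} (the engine behind Corollary \ref{tl1}), and the internal transfer of Theorem \ref{th10}, which passes through the uniform covers $\mathcal{C}_{l}$ and the locally finite refinement supplied by Lemma \ref{yl32}. Consequently, the only thing demanding care here is to confirm that local paracompactness really is one of the admissible properties $\mathscr P$, which the cited sections of \cite{E} guarantee; beyond that, the corollary is a formal consequence of the two preceding statements.
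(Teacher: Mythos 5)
Your proposal is correct and coincides with the paper's own argument: the paper derives this corollary exactly by combining Corollary \ref{tl1}(5) (to transfer local paracompactness from $G/H$ to $G$) with Theorem \ref{th10} (to upgrade local paracompactness of $G$ to paracompactness). Your remark that paracompactness is inherited by regular closed sets and preserved by perfect preimages is precisely the justification the paper records before Corollary \ref{tl1}, so nothing is missing.
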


\smallskip
{\bf Acknowledgements}. We wish to thank anonymous referees and professor Jiling Cao for the detailed list of corrections, suggestions to the paper, and all her/his efforts
in order to improve the paper.

\end{document}